\documentclass[a4paper,reqno]{amsart}

\usepackage{amsmath,amssymb,amsthm}
\usepackage{mathrsfs}
\usepackage{indentfirst,color}
\usepackage[colorlinks,citecolor=red,linkcolor=blue,urlcolor=cyan]{hyperref}
\usepackage{graphicx}
\usepackage{tikz,tikz-cd}
\usepackage{bm}

\theoremstyle{plain}%
\newtheorem{theorem}{Theorem}[section]%
\newtheorem{proposition}[theorem]{Proposition}%
\newtheorem{lemma}[theorem]{Lemma}%
\newtheorem{example}[theorem]{Example}%
\newtheorem{corollary}[theorem]{Corollary}%

\theoremstyle{definition}%
\newtheorem{definition}[theorem]{Definition}%
\newtheorem{remark}[theorem]{Remark}%

\newcommand{\CC}{{\mathbb{C}}}%

\makeatletter
\newsavebox{\@brx}
\newcommand{\llangle}[1][]{\savebox{\@brx}{\(\m@th{#1\langle}\)}%
    \mathopen{\copy\@brx\kern-0.5\wd\@brx\usebox{\@brx}}}
\newcommand{\rrangle}[1][]{\savebox{\@brx}{\(\m@th{#1\rangle}\)}%
    \mathclose{\copy\@brx\kern-0.5\wd\@brx\usebox{\@brx}}}
\makeatother

\newcommand{\loc}{\textup{loc}}%
\newcommand{\sm}{\textup{sm}}%
\newcommand{\nd}{\textup{nd}}%
\newcommand{\image}{\textup{i}}%
\newcommand{\Hom}{\textup{Hom}}%
\renewcommand{\bar}[1]{\overline{#1}}%
\newcommand{\ddbar}{\image\partial\bar{\partial}}%

\renewcommand{\leq}{\leqslant}%
\renewcommand{\geq}{\geqslant}%

\numberwithin{equation}{section}

\begin{document}

\title[A logarithmic Bogomolov--Sommese vanishing theorem]{A logarithmic Bogomolov--Sommese vanishing theorem on compact K\"ahler manifolds}

\author[Zhi Li]{Zhi Li}
\address{Zhi Li: School of Mathematical Sciences and Key Laboratory of Mathematics and Information Networks (Ministry of Education), Beijing University of Posts and Telecommunications, Beijing 100876, China.}
\email{lizhi@amss.ac.cn, lizhi10@foxmail.com}

\author[Xiankui Meng]{Xiankui Meng}
\address{Xiankui Meng: School of Mathematical Sciences and Key Laboratory of Mathematics and Information Networks (Ministry of Education), Beijing University of Posts and Telecommunications, Beijing 100876, China.}
\email{mengxiankui@amss.ac.cn}

\author[Kai Pang]{Kai Pang}
\address{Kai Pang: Institute of Mathematics, Academy of Mathematics and Systems Science,
Chinese Academy of Sciences, Beijing 100190, China.}
\email{pangkai@amss.ac.cn}

\author[Chenghao Qing]{Chenghao Qing}
\address{Chenghao Qing: Yau Mathematical Sciences Center, Tsinghua University, Beijing 100084, China.}
\email{qingchenghao@amss.ac.cn}

\author[Xiangyu Zhou]{Xiangyu Zhou}
\address{Xiangyu Zhou: Institute of Mathematics, Academy of Mathematics and Systems Science, Chinese Academy of Sciences, Beijing 100190, China.}
\email{xyzhou@math.ac.cn}

\date{}

\thanks{The first and second author were partially supported by the National Natural Science Foundation of China (Grant No. 12271057) and by the National Key R\&D Program of China (Grant No. 2021YFA1002600). 
The fourth author was supported by the China Postdoctoral Science Foundation (Grant No. 2025M773087). 
The fifth author was supported by National Key R\&D Program of China (Grant No. 2021YFA1003100) and by the National Natural Science Foundation of China (Grant No. 12288201).
}

\begin{abstract}

    In this paper, we establish a logarithmic Bogomolov--Sommese vanishing theorem in terms of numerical dimension for pseudo-effective line bundles on compact K\"ahler manifolds. As an application, we obtain a rigidity result with vanishing second Chern class for logarithmic cotangent bundles by combining the vanishing theorem with a structure theorem of Iwai and Matsumura.
\end{abstract}

\keywords{Pseudo-effective line bundle, Vanishing theorem, Multiplier ideal sheaf, Singular Hermitian metric}

\subjclass[2020]{
    32L20, 
    14F18, 
    32L10, 
    32C35. 
}

\maketitle

\section{Introduction}
The logarithmic Bogomolov--Sommese theorem bounds the Kodaira--Iitaka dimension of a rank-one subsheaf of $\Omega_X^p(\log D)$ by $p$. 
The classical version is the following theorem.

\begin{theorem}[\cite{Bog78,Bog80,SS85}]\label{Theorem:BS vanishing}
    Let $X$ be a projective manifold and $L$ a line bundle over $X$. 
    Let $D$ be a reduced simple normal crossing divisor on $X$. Then
    \begin{equation*}
        H^0(X,\Omega^p_X(\log D)\otimes L^{-1})=0 \quad \text{for every } p<\kappa(L),
    \end{equation*}
    where $\kappa(L)$ denotes the Kodaira--Iitaka dimension of $L$.
\end{theorem}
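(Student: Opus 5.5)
We argue by contradiction. Suppose $0\neq s\in H^0(X,\Omega^p_X(\log D)\otimes L^{-1})$ with $p<\kappa(L)$. Then $s$ defines an injective sheaf map $L\hookrightarrow\Omega^p_X(\log D)$. Set $k=\kappa(L)$, so $k\geq 1$. Replacing $L$ by a suitable power $L^m$ and composing $s^{\otimes m}$ with the map $\mathrm{Sym}^m\Omega^p_X(\log D)\to(\Omega^p_X(\log D))^{\otimes m}$ changes nothing essential. We therefore assume that the sections of $L$ define the Iitaka fibration, namely a dominant rational map $\varphi\colon X\dashrightarrow Y$ with $\dim Y=k$. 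Its restriction to a nonempty Zariski open set $U$ is a smooth morphism to a smooth quasi-projective $Y^\circ\subset Y$.

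The key step is the classical Bogomolov trick. Take sections $t_0,\dots,t_k\in H^0(X,L)$ whose ratios $f_i=t_i/t_0$ give local coordinates on $Y$ pulled back to $X$. The forms $t_0^{-1}\cdot s$ and $t_i^{-1}\cdot s$ are meromorphic sections of $\Omega^p_X(\log D)$, and $\omega:=s/t_0$ is a rational $p$-form with log poles along $D$ and possible poles along $\{t_0=0\}$. One then checks two facts:
\begin{itemize}
\item[(a)] $\omega\wedge df_i=0$ for every $i$.
\item[(b)] $\omega$ is closed after suitable adjustment.
\end{itemize}
The standard route is to show that a log $p$-form with values in $L^{-1}$, where $L$ has many sections, is a pullback of a form on $Y$. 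First, closedness: by Deligne's theory, global log forms on compact Kähler $X$ are closed. Applied on a suitable finite cyclic cover or to $s\cdot t$ for sections $t$ of $L$, this gives $d(t s)=0$ as an $\Omega^{p+1}(\log D)$-valued computation. In coordinates, $d(t_i\cdot s)=0$ together with $d(t_0\cdot s)=0$ yields $df_i\wedge\omega=0$. Restricted to a general fiber $F$ of $\varphi$, this says $\omega$ is divisible by each $df_i$, hence by $df_1\wedge\cdots\wedge df_k$, a $k$-form. Since $\omega$ has degree $p<k$, this forces $\omega=0$, a contradiction.

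To make the twisted closedness precise, I would pass to the cyclic cover $\pi\colon X'\to X$ obtained by extracting an $m$-th root of a general section of $L^m$, followed by log resolution, with $D'$ the reduced preimage of $D$ plus the branch locus. On $X'$ the pulled-back form $\pi^*s\cdot\tau$, where $\tau$ is the tautological root section, becomes an honest log $p$-form. Deligne's theorem that holomorphic log forms on compact Kähler manifolds are $d$-closed then applies.

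The main obstacle is this step: controlling poles and log behaviour on the cover, so that the products $t\cdot s$ genuinely give global log forms to which closedness applies. I would handle it by working with $L^{m}$ for $m$ large, and by checking in local normal-crossing coordinates that pullbacks of $\Omega^p_X(\log D)$ land in $\Omega^p_{X'}(\log D')$.
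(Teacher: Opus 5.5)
Your argument is the classical Bogomolov (Esnault--Viehweg) proof. It is correct provided the key step is run entirely through the cyclic covers of your last two paragraphs, but several points in the write-up need fixing.

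\textbf{The opening reduction must go.} Replacing $L$ by $L^m$ via $s^{\otimes m}$ produces a section of $\mathrm{Sym}^m\Omega^p_X(\log D)\otimes L^{-m}$, not a twisted log $p$-form. Closedness, which is the heart of the argument, is then lost. The bound genuinely fails for line subbundles of symmetric powers: if $\Omega^1_X$ is ample, every ample line bundle embeds in some $\mathrm{Sym}^m\Omega^1_X$. So you must keep $L\hookrightarrow\Omega^p_X(\log D)$ and reach $H^0(X,L^m)$ only through $m$-th roots.

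\textbf{Bookkeeping.} $t_0^{-1}s$ is a section of $\Omega^p_X(\log D)\otimes L^{-2}$, not a meromorphic log form. The closed objects are the forms $\tau\cdot\pi^*s$ on the covers.

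\textbf{Extracting $df_i\wedge s=0$.} Write locally $s=\sigma\otimes e^{-1}$ and $t=\theta e^m$. Off $D$ and the branch locus, $d(\tau\cdot\pi^*s)=0$ descends to the identity $d\sigma+\frac{1}{m}\,d\log\theta\wedge\sigma=0$ on $X$. Write this for $t_0$ and for $t_i$ (a separate cover for each is fine) and subtract; this gives $df_i\wedge\sigma=0$.

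\textbf{Final step.} The contradiction is pointwise linear algebra at a general point where $df_1,\dots,df_k$ are independent. It is not a restriction to a fibre, on which the $df_i$ vanish.

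\textbf{Comparison with the paper.} The paper does not prove Theorem~\ref{Theorem:BS vanishing}; it cites it. It recovers it, with a larger range, from Theorem~\ref{Main Theorem 1}: when $\kappa(L)\geq 1$, $L$ is pseudo-effective and $\kappa(L)\leq\nd(L)$. The hyperplane-induction remark is not an independent proof, since it invokes Theorem~\ref{Theorem:BS vanishing} in the big case.

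The paper's route is purely analytic. It uses the metrics of Lemma~\ref{Lemma:Construct suitable metrics} on log resolutions and Demailly--Pali's degenerate Monge--Amp\`ere equation with density built from $|v|^2$. It then applies a Bochner--Kodaira--Nakano estimate for the complete metrics $\Omega_\varepsilon+\delta\Omega_P$ and a H\"older/determinant inequality against $\int\gamma_\varepsilon^n\geq c_0\varepsilon^{n-k}$.

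Your route is far shorter and algebraic. However, it needs genuine sections of $L^{\otimes m}$ to produce the functions $f_i$, and it needs Hodge-theoretic closedness of log forms, so it can only ever see $\kappa(L)$. The paper's route uses neither sections nor closedness. That is exactly what lets it reach $p<\nd(L)$ for merely pseudo-effective $L$ on compact K\"ahler manifolds.
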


Note that Theorem~\ref{Theorem:BS vanishing} is meaningful only when $\kappa(L)>0$, in which case $L$ is $\mathbb{Q}$-effective, in particular, pseudo-effective.
Thus one only needs to consider pseudo-effective line bundles.
By applying the Calabi-Yau theorem, Mourougane (in the nef case) and Boucksom obtained a Bogomolov-type vanishing theorem on compact K\"ahler manifolds, which is useful in birational geometry.
\begin{theorem}[{\cite{Bou02}}]\label{Theorem:Boucksom}
    Let $X$ be a compact K\"ahler manifold and $L$ a pseudo-effective line bundle over $X$. 
    Then
    \begin{equation*}
        H^0(X,\Omega^p_X\otimes L^{-1})=0 \quad \text{for every } p<\nd(L),
    \end{equation*}
    where $\nd(L)$ denotes the numerical dimension of $L$.
\end{theorem}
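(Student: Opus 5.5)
Assume, for contradiction, that there is a nonzero section $s\in H^0(X,\Omega^p_X\otimes L^{-1})$ with $p<\nd(L)$; equivalently, an injection $L\hookrightarrow\Omega^p_X$. The plan is to exploit the fact that a pseudo-effective line bundle of numerical dimension $k=\nd(L)$ carries positive currents whose $k$-fold (non-pluripolar, or more precisely the \emph{movable}) self-intersection is nonzero. A nonzero $p$-form with values in $L^{-1}$ forces $L$ to be "degenerate of rank $\le p$" in a suitable sense. We then get a contradiction with $p<k$.

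\textbf{Step 1 (regularization).} Fix a Kähler form $\omega$. For each $\varepsilon>0$, Demailly's regularization provides a smooth Hermitian metric $h_\varepsilon$ on $L$ whose curvature satisfies $\Theta_\varepsilon:=i\Theta_{h_\varepsilon}(L)\geq -\varepsilon\omega$ away from a small set; in the nef case this holds everywhere. The hypothesis $\nd(L)\geq k$ means that the positive parts have $\int_X(\Theta_\varepsilon+\varepsilon\omega)^k\wedge\omega^{n-k}\geq c>0$, uniformly in $\varepsilon$. This is Boucksom's definition via movable intersection products.

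\textbf{Step 2 (Calabi--Yau).} Following Mourougane/Boucksom, solve the Monge--Ampère equation to produce Kähler forms $\omega_\varepsilon\in\{\Theta_\varepsilon+\varepsilon\omega\}+\delta\{\omega\}$ with
\[
\omega_\varepsilon^n=C_\varepsilon\,(\Theta_\varepsilon+\varepsilon\omega)^k\wedge\omega^{n-k}\Big/\text{suitable normalization},
\]
or more simply $\omega_\varepsilon^n=C_\varepsilon\omega^n$. The point is that the eigenvalues $\lambda_1\leq\dots\leq\lambda_n$ of $\Theta_\varepsilon+\varepsilon\omega$ with respect to $\omega_\varepsilon$ can be controlled in integral: at least $k$ of them are not small on a set of definite measure.

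\textbf{Step 3 (Bochner estimate).} The section $s$, viewed as a holomorphic $p$-form with values in $L^{-1}$, satisfies the Bochner--Kodaira inequality
\[
0\leq\int_X|\bar\partial s|^2=\dots\ \Rightarrow\ \int_X\langle[i\Theta(L^{-1}),\Lambda_{\omega_\varepsilon}]s,s\rangle\,dV\geq -\int_X|D's|^2\text{-type terms}.
\]
For a $(p,0)$-form, $[i\Theta(L^{-1}),\Lambda]$ acts on $dz_I$ by $-\sum_{j\notin I}\lambda_j$. Hence
\[
\int_X\Big(\sum_{j\notin I}\lambda_j\Big)|s|^2\leq O(\varepsilon),
\]
while $\sum_{j\notin I}\lambda_j\geq\lambda_1+\dots+\lambda_{n-p}$. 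The Kähler-Einstein/Calabi–Yau normalization together with the integral lower bound from Step 1 is used to show $\int(\lambda_1+\dots+\lambda_{n-p})|s|^2$ stays bounded below when $n-p>n-k$, i.e. $p<k$. Letting $\varepsilon\to0$ then forces $s=0$.

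The main obstacle is Step 3's quantitative link: converting the cohomological lower bound $(\Theta_\varepsilon+\varepsilon\omega)^k\cdot\omega^{n-k}\geq c$ into a pointwise-in-measure lower bound on the sum of the $n-p$ smallest eigenvalues, weighted by $|s|^2$. I would handle it by choosing the Monge--Ampère right-hand side so that $\omega_\varepsilon$ is comparable to $\Theta_\varepsilon+\varepsilon\omega$ in the "big" directions. Then I would use the arithmetic–geometric inequality $\lambda_{n-k+1}\cdots\lambda_n\gtrsim$ density, together with the normalization $|s|^2_{\omega_\varepsilon,h_\varepsilon}$ being bounded, to get a contradiction via Hölder.
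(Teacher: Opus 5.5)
Your outline has the right ingredients, but the step you flag as the main obstacle is where the whole proof lives, and the mechanism you propose for it does not close.

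In the paper (the proof of Theorem~\ref{Main Theorem 1} with $D=0$), three choices are coupled. First, the metric on $L$ is twisted by the Monge--Amp\`ere potential, $\widehat h_\varepsilon=\widetilde h_\varepsilon e^{-\varphi_\varepsilon}$, so its curvature is $\omega_\varepsilon-3\varepsilon\pi_\varepsilon^*\omega$. Second, Bochner--Kodaira--Nakano is applied with the \emph{fixed} background metric $\Omega_\varepsilon$, and the $\lambda_j$ are the eigenvalues of $\omega_\varepsilon$ relative to $\Omega_\varepsilon$. Third, the equation $\omega_\varepsilon^n=e^{\varphi_\varepsilon}F_\eta\Omega_\varepsilon^n$ is chosen so that $\lambda_1\cdots\lambda_n=G_\varepsilon+R_\varepsilon$. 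Here $G_\varepsilon=|v|^2_{\Omega_\varepsilon,H_\varepsilon}$ is exactly the weight in the Bochner inequality, and $\int R_\varepsilon\Omega_\varepsilon^n$ is negligible. Only then do $\lambda_{n-p}^{n-p}S_\varepsilon\ge G_\varepsilon+R_\varepsilon$ (with $S_\varepsilon$ the product of the $p$ \emph{largest} eigenvalues) and H\"older give $I_\varepsilon\ge M_\varepsilon^{1+1/(n-p)}(\int S_\varepsilon\Omega_\varepsilon^n)^{-1/(n-p)}$. Hence $I_\varepsilon/M_\varepsilon\gtrsim\varepsilon^{(n-k)/(n-p)}$, against $I_\varepsilon/M_\varepsilon\le 8(n-p)\varepsilon$.

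In your set-up the density and the weight are decoupled. With $\omega_\varepsilon^n=C_\varepsilon\omega^n$ and only an upper bound on $|s|^2$, nothing prevents the weight (which carries $e^{\varphi_\varepsilon}$ once the metric is twisted) from concentrating where $\lambda_{n-p}$ is small. You would need a uniform bound on $\sup|s|^2_\varepsilon/\int|s|^2_\varepsilon$. That is an extra argument you do not supply, and the paper's choice of density makes it unnecessary. Three further problems:
\begin{itemize}
\item Eigenvalues of $\Theta_\varepsilon+\varepsilon\omega$ with respect to $\omega_\varepsilon$ are not controlled by the equation. Moreover, with $\Lambda_{\omega_\varepsilon}$ the error $\varepsilon\omega$ is of size $O(1)$ in directions where $\omega_\varepsilon$ is of size $\varepsilon$.
\item The relevant AM--GM inequality involves the top $p$ eigenvalues, not $\lambda_{n-k+1}\cdots\lambda_n$.
\item You should not expect $\int(\lambda_1+\cdots+\lambda_{n-p})|s|^2$ to stay bounded below. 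The estimates only give order $\varepsilon^{(n-k)/(n-p)}$, which suffices precisely because $p<k$.
\end{itemize}

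The treatment of singularities and of the upper bound is also missing. For pseudo-effective non-nef $L$ there are no smooth metrics with $\sqrt{-1}\Theta\ge-\varepsilon\omega$ for small $\varepsilon$, so Step 1 as written is not meaningful. One needs metrics with analytic singularities, an $\varepsilon$-dependent log resolution, and a K\"ahler perturbation $\gamma_\varepsilon$ (Lemma~\ref{Lemma:Construct suitable metrics}). The Bochner argument must then run on $Y_\varepsilon$ with complete metrics $\Omega_\varepsilon+\delta\Omega_P$ and cut-offs, letting $\delta\to0$.

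Besides the lower bound $\int\gamma_\varepsilon^n\ge c_0\varepsilon^{n-k}$ coming from $\nd(L)=k$, the contradiction needs the uniform \emph{upper} bound $\int\gamma_\varepsilon^p\wedge\Omega_\varepsilon^{n-p}\le C_0$ to control $\int S_\varepsilon$. Its uniformity in $\varepsilon$ is not a cohomological triviality, since the classes and manifolds vary. The paper gets it from monotonicity of non-pluripolar products, which your sketch never identifies.

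Finally, mind the signs. For a holomorphic $(p,0)$-form, Bochner--Kodaira--Nakano gives $\int\langle[\sqrt{-1}\Theta(L^{-1}),\Lambda]s,s\rangle=-\|D's\|^2-\|(D')^*s\|^2\le 0$. The commutator acts by $\sum_I(\sum_{j\notin I}\mu_j)|s_I|^2$, with $\mu_j$ the curvature eigenvalues of $L$. The inequality you wrote points the wrong way to yield an $O(\varepsilon)$ bound.
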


The classical vanishing theorems of Bogomolov–Sommese (Theorem~\ref{Theorem:BS vanishing}) and Boucksom (Theorem~\ref{Theorem:Boucksom}) have been fundamental in birational geometry. The former establishes a precise link between the Kodaira–Iitaka dimension and logarithmic differential forms on projective manifolds, while the latter extends the vanishing to pseudo-effective line bundles on compact Kähler manifolds using the numerical dimension. Therefore, finding a logarithmic analogue of Boucksom's numerical criterion is a natural problem. Stimulated by these deep results and the problem, we establish the following theorem, which solves the problem and provides a unified numerical vanishing criterion for logarithmic forms in the Kähler setting.

\begin{theorem}\label{Main Theorem 1}
    Let $X$ be a compact K\"ahler manifold, $L$ a pseudo-effective line bundle over $X$ and $D$ a reduced simple normal crossing divisor on $X$. Then
    \begin{equation*}
        H^0(X,\Omega^p_X(\log D)\otimes L^{-1})=0 \quad \text{for every } p<\nd(L).
    \end{equation*}
\end{theorem}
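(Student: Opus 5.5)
We argue by contradiction. Suppose $p<\nd(L)$ and there is a nonzero section $s\in H^0(X,\Omega^p_X(\log D)\otimes L^{-1})$, i.e.\ an injection $L\hookrightarrow\Omega^p_X(\log D)$. Following Boucksom's proof of Theorem~\ref{Theorem:Boucksom}, we play two things against each other. Positivity of $L$ with numerical dimension larger than $p$ produces, for every $\varepsilon>0$, a Kähler-type form $\omega_\varepsilon$ in a class of the form $c_1(L)+\varepsilon\{\omega\}$ whose $(p+1)$-st power does not degenerate in the limit. The holomorphic form $s$ takes values in $\Omega^p(\log D)\otimes L^{-1}$ and carries a natural pointwise norm. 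Contracting $s$ against $\omega_\varepsilon^{p+1}$ will force $s=0$.

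\textbf{Step 1 (metric input).} Since $\nd(L)\geq p+1$, we have $\langle c_1(L)^{p+1}\rangle\cdot\{\omega\}^{n-p-1}>0$. Using Boucksom's construction via the Calabi--Yau theorem, we choose currents with minimal singularities $T_\varepsilon\in c_1(L)+\varepsilon\{\omega\}$, together with Monge--Ampère solutions, and obtain smooth Kähler forms $\alpha_\varepsilon$ in $c_1(L)+\varepsilon\{\omega\}$ such that $\alpha_\varepsilon^{p+1}\wedge\omega^{n-p-1}$ is bounded below in mass as $\varepsilon\to0$. Equivalently, we take smooth metrics $h_\varepsilon$ on $L$ with $\image\Theta_{h_\varepsilon}(L)+\varepsilon\omega=\alpha_\varepsilon>0$.

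\textbf{Step 2 (logarithmic pointwise inequality).} Consider $|s|^2_{h_\varepsilon^{-1}}$, where $s$ is viewed as an $L^{-1}$-valued logarithmic $p$-form, measured with a Poincaré-type (complete) metric $\omega_P$ on $X\setminus D$, with $\omega$ on the compact part. The key computation is a Bochner/Lelong-type inequality on $X\setminus D$:
\begin{equation*}
\ddbar\log|s|^2_{h_\varepsilon^{-1}}\geq \alpha_\varepsilon-\varepsilon\omega-\text{(curvature of }\Omega^p(\log D)\text{ w.r.t. }\omega_P).
\end{equation*}
The Poincaré metric has bounded curvature, so the last term is $\geq -C\,\omega_P$. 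In Boucksom's argument one instead uses the algebraic fact that if $s\neq0$ is a $p$-form, then the wedge $\image^{p^2}s\wedge\bar s$ is a semipositive $(p,p)$-form and $\alpha_\varepsilon^{p+1}$ restricted to its kernel must be small. Concretely, integrating
\begin{equation*}
\int_X \alpha_\varepsilon^{p+1}\wedge \image^{p^2}\frac{s\wedge\bar s}{|s|^2}\wedge\omega^{n-2p-1}
\end{equation*}
and comparing with cohomology classes shows that this quantity tends to $0$. Here $s\wedge\bar s$ pairs with $h_\varepsilon^{-1}$, giving $e^{-\varphi_\varepsilon}$ weights, which can be normalized. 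Meanwhile linear algebra bounds it below by a constant times $\alpha_\varepsilon^{p+1}\wedge\omega^{n-p-1}$ on the set where $s\neq0$.

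\textbf{Step 3 (handling $D$).} The log poles enter because $s\wedge\bar s$ is no longer a smooth form on $X$. It is integrable, however: a log form has $|dz/z|^2$ singularities, and these are $L^1$ against smooth forms only after suitable combination. So the integration by parts of Step 2 must be justified with cutoff functions $\chi_\delta(\log\log|z|^{-2})$ adapted to Poincaré growth. We expect the main obstacle to be showing that the boundary terms vanish, i.e.\ that $\partial\chi_\delta\wedge(\cdots)$ has mass tending to $0$ even though $s\wedge\bar s$ is only Poincaré-bounded and $\alpha_\varepsilon$ degenerates as $\varepsilon\to0$. We would first try to use the completeness of the Poincaré metric together with $\omega_P$-boundedness of $|s|$. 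If that fails, we would use the residue exact sequence $0\to\Omega^p_X\to\Omega^p_X(\log D)\to\Omega^{p-1}_{D}(\log D')\to0$ and induct on $p$ and on the number of components. This reduces the problem to Theorem~\ref{Theorem:Boucksom} on $X$ and to the statement for $L|_{D_i}$, using $\nd(L|_{D_i})\geq\nd(L)-1>p-1$; this restriction inequality itself needs care when $L|_{D_i}$ lies in a non-Kähler locus, but it holds for pseudo-effective restrictions.
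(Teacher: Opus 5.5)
Your Step 2, which is supposed to produce the contradiction, does not work, and the mechanism that actually drives the proof is missing.

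The integrand $\alpha_\varepsilon^{p+1}\wedge\image^{p^2}s\wedge\bar s/|s|^2\wedge\omega^{n-2p-1}$ has three problems:
\begin{itemize}
\item It is not closed, so its integral cannot be evaluated by ``comparing with cohomology classes''.
\item The exponent $n-2p-1$ is negative as soon as $2p\geq n$, already for $n=2$, $p=1$, $L$ big.
\item The claimed pointwise lower bound by $\alpha_\varepsilon^{p+1}\wedge\omega^{n-p-1}$ is false. For $\alpha=\sum_{j\leq p+1}\image dz_j\wedge d\bar z_j$ and $s=dz_1\wedge\cdots\wedge dz_p$ the left side vanishes while the right side is positive.
\end{itemize}
No pointwise estimate can do this job. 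Bochner--Kodaira--Nakano for the holomorphic $(p,0)$-form only says that $\lambda_1+\cdots+\lambda_{n-p}$ (the $n-p$ smallest eigenvalues) is $O(\varepsilon)$ \emph{on average against $|s|^2$}. That is harmless if the volume of the positive form sits where $s$ is small. The paper forces the volume to sit where $s$ is by solving a degenerate Monge--Amp\`ere equation with density essentially $|s|^2$:
$(\gamma_\varepsilon+\ddbar\varphi)^n=e^{\varphi}F_\eta\Omega_\varepsilon^n$ with $F_\eta=e^{\rho_\varepsilon}w_\varepsilon(|v|^2+\eta^2)$. This uses Theorem~\ref{Theorem:Demailly--Pali}, with $L^\infty$ bounds uniform in $\eta$. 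It then twists the metric of $L$ by $e^{-\varphi}$, so that $|v|^2_{H_\varepsilon}=G_\varepsilon$ equals $\lambda_1\cdots\lambda_n$ up to the small term $R_\varepsilon$. H\"older then combines $\int(\lambda_1+\cdots+\lambda_{n-p})G_\varepsilon\leq 4(n-p)\varepsilon\int G_\varepsilon$ with $\int\lambda_{n-p+1}\cdots\lambda_n\leq C_{n,p}\int\gamma_\varepsilon^p\wedge\Omega_\varepsilon^{n-p}\leq C_{n,p}C_0$. This gives $(\int\gamma_\varepsilon^n)^{1/(n-p)}\lesssim\varepsilon$, against $\int\gamma_\varepsilon^n\geq c_0\varepsilon^{n-k}$. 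So the metric input you need is a volume lower bound of order $\varepsilon^{n-k}$ plus an \emph{upper} bound on the $p$-th mixed mass. A lower bound on $\alpha_\varepsilon^{p+1}\wedge\omega^{n-p-1}$ is not it; for smooth closed forms that is cohomological and needs no Monge--Amp\`ere equation.

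The other steps also have concrete problems.

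Step 1 silently assumes $L$ is nef. Smooth metrics with $\image\Theta_{h_\varepsilon}(L)\geq-\varepsilon\omega$ for every $\varepsilon>0$ exist exactly when $L$ is nef. For non-nef pseudo-effective $L$, the class $c_1(L)+\varepsilon\{\omega\}$ is not K\"ahler for small $\varepsilon$. The paper instead takes metrics with analytic singularities and log-resolves them together with $D$. It then works with a K\"ahler form $\gamma_\varepsilon$ on $\widetilde X_\varepsilon$ and a metric with divisorial poles along a larger snc divisor $B_\varepsilon$ (Lemma~\ref{Lemma:Construct suitable metrics}). This is why the analysis runs on $\widetilde X_\varepsilon\setminus B_\varepsilon$ with a complete Poincar\'e-type metric even when $D=0$.

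In Step 3, your concern about boundary terms is the right one; the paper handles it with cut-offs for $\Omega_\varepsilon+\delta\Omega_P$ and then lets $\delta\to0$. However, log forms are not $\omega_P$-bounded: $|dz_j/z_j|^2_{\omega_P}\sim(\log|z_j|^2)^2$. Hence $|v|^2_{\omega_P}\omega_P^n$ can behave like $|z_1|^{-2}$ near $D_1$, which is not integrable. The extra weight $w_\varepsilon=\prod_j|s_j|^{2\tau_\varepsilon}$ built into $H_\varepsilon$ is what makes $v$ square-integrable.

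Finally, the residue-sequence fallback rests on $\nd(L|_{D_i})\geq\nd(L)-1$, which fails even for nef $L$. On the blow-up of $\mathbb{P}^2$ at a point, take $L=\pi^*\mathcal{O}(1)$, $D=E$, $p=1$. Then $L|_E\cong\mathcal{O}_E$, so $H^0(E,L^{-1}|_E)\neq0$ and the induction cannot conclude. For the big line bundle $L=\pi^*\mathcal{O}(1)\otimes\mathcal{O}(E)$, the restriction $L|_E\cong\mathcal{O}_{\mathbb{P}^1}(-1)$ is not even pseudo-effective.
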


Theorem~\ref{Main Theorem 1} recovers Theorem~\ref{Theorem:Boucksom} by taking $D=0$, and in the projective case, since $\kappa(L)\leqslant \nd(L)$, it means Theorem~\ref{Theorem:BS vanishing} be with a potentially larger vanishing range. By Serre duality, Theorem~\ref{Main Theorem 1} can be reformulated equivalently as follows.
\begin{corollary}\label{Corollary:Corollary without multiplier}
    Let $X$ be a compact K\"ahler manifold of dimension $n$ and $L$ a pseudo-effective line bundle over $X$.
    Let $D$ be a reduced simple normal crossing divisor on $X$. Then
    \begin{equation*}
        H^n(X,\Omega^p_X(\log D)\otimes L\otimes\mathcal{O}_X(-D))=0 \quad \text{for every } p\geq n-\nd(L)+1.
    \end{equation*}
    In particular, when $D=0$,
    \begin{equation*}
        H^n(X,\Omega^p_X\otimes L)=0 \quad \text{for every } p\geq n-\nd(L)+1.
    \end{equation*}
\end{corollary}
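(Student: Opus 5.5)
The plan is to deduce the corollary from Theorem~\ref{Main Theorem 1} by Serre duality, after identifying the dual of $\Omega^p_X(\log D)$ in terms of logarithmic forms of complementary degree.

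\textbf{Step 1: the perfect pairing.} Write $\Omega^n_X(\log D)=K_X\otimes\mathcal{O}_X(D)$. Near a point where $D=\{z_1\cdots z_k=0\}$, the sheaf $\Omega^1_X(\log D)$ is locally free with frame $dz_1/z_1,\dots,dz_k/z_k,dz_{k+1},\dots,dz_n$. Wedge products of these frame elements give local frames of all the $\Omega^q_X(\log D)$. In these frames the wedge product
\begin{equation*}
    \Omega^p_X(\log D)\otimes\Omega^{n-p}_X(\log D)\to\Omega^n_X(\log D)
\end{equation*}
pairs complementary multi-indices with coefficient $\pm1$ and all other pairs to $0$. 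So it is a perfect pairing, which gives
\begin{equation*}
    \bigl(\Omega^p_X(\log D)\bigr)^{*}\otimes K_X\cong\Omega^{n-p}_X(\log D)\otimes\mathcal{O}_X(-D).
\end{equation*}

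\textbf{Step 2: Serre duality.} Apply Serre duality on the compact complex manifold $X$ to the locally free sheaf $E=\Omega^p_X(\log D)\otimes L\otimes\mathcal{O}_X(-D)$. This gives $H^n(X,E)\cong H^0(X,E^*\otimes K_X)^*$. By Step 1 applied with $p$ and $n-p$ exchanged,
\begin{equation*}
    E^*\otimes K_X\cong\bigl(\Omega^p_X(\log D)\bigr)^{*}\otimes\mathcal{O}_X(D)\otimes K_X\otimes L^{-1}\cong\Omega^{n-p}_X(\log D)\otimes L^{-1}.
\end{equation*}
The condition $p\geq n-\nd(L)+1$ is the same as $n-p<\nd(L)$. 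Theorem~\ref{Main Theorem 1} therefore gives $H^0(X,\Omega^{n-p}_X(\log D)\otimes L^{-1})=0$, and hence $H^n(X,E)=0$.

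\textbf{Step 3: the case $D=0$.} Setting $D=0$ gives the second statement directly. The argument also runs backwards, so the corollary and the theorem are equivalent.

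\textbf{Main point of care.} The only real check is the twist by $\mathcal{O}_X(-D)$ in Step 1. It comes from $\Omega^n_X(\log D)$ being $K_X(D)$ rather than $K_X$, and the local frame computation confirms both that the pairing is perfect and that this is the correct twist.
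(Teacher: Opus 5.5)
Your proposal is correct and follows the paper's route. The paper simply applies the logarithmic Serre duality isomorphism recalled in Section~2, with $q=n$ and $L$ replaced by $L\otimes\mathcal{O}_X(-D)$, to identify $H^n(X,\Omega^p_X(\log D)\otimes L\otimes\mathcal{O}_X(-D))^*$ with $H^0(X,\Omega^{n-p}_X(\log D)\otimes L^{-1})$ and then invokes Theorem~\ref{Main Theorem 1}; you derive that same duality from the perfect wedge pairing into $\Omega^n_X(\log D)=K_X\otimes\mathcal{O}_X(D)$ plus ordinary Serre duality (in Step 2 no exchange of $p$ and $n-p$ is actually needed, since Step 1 as stated already gives the identification you use).
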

Via Siu's decomposition of currents, we can obtain a vanishing theorem with multiplier ideal sheaves.
\begin{corollary}\label{Corollary:Corollary with multiplier}
    Let $X$ be a compact K\"ahler manifold of dimension $n$ and $L$ a pseudo-effective line bundle over $X$.
    Let $D$ be a reduced simple normal crossing divisor on $X$. 
    Assume that $\{\alpha\}$ is a pseudo-effective class on $X$ and $c_1(L)-\{\alpha\}=\{\theta+\frac{\ddbar\psi}{2\pi}\}$, where $\theta$ is a smooth real closed $(1,1)$-form and $\psi$
    is a function on $X$ such that $\theta+\frac{\ddbar\psi}{2\pi}\geq0$ in the sense of currents.
    Then
    $$H^n(X,\Omega^p_X(\log D)\otimes L\otimes\mathcal{O}_X(-D)\otimes\mathcal{I}(\psi))=0 \quad\text{for } p\geq n-\nd(\{\alpha\})+1.$$
\end{corollary}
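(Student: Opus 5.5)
The plan is to absorb the multiplier ideal into an honest line bundle. Concretely, I will replace $L\otimes\mathcal{I}(\psi)$ by $L\otimes\mathcal{O}_X(-B)$ for a suitable effective integral divisor $B$, check that the cohomology does not change in degree $n$, and then apply Corollary~\ref{Corollary:Corollary without multiplier} to the pseudo-effective line bundle $L(-B)$.

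\emph{Step 1: Siu decomposition and the divisor $B$.} Write $T=\theta+\frac{\ddbar\psi}{2\pi}\geq 0$ and take its Siu decomposition
\[
T=\sum_j \nu_j[E_j]+R ,
\]
where the $E_j$ are irreducible divisors, $\nu_j>0$, and $R\geq 0$ has the following property: for every $c>0$, the set $\{x:\nu(R,x)\geq c\}$ is an analytic set of codimension $\geq 2$. Set $B=\sum_j\lfloor 2\nu_j\rfloor' E_j$, where the integer part is normalized to match the convention $\mathcal{I}(\psi)=\{f:|f|^2e^{-2\psi}\in L^1_{\loc}\}$; in the standard normalization this is just $B=\sum_j\lfloor\nu_j\rfloor E_j$. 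By Skoda's lemma, near a generic point of $E_j$ every $f\in\mathcal{I}(\psi)$ vanishes to order $\geq\lfloor\nu_j\rfloor$ along $E_j$. Hence $\mathcal{I}(\psi)\subseteq\mathcal{O}_X(-B)$.

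\emph{Step 2: the new line bundle.} Put $L'=L\otimes\mathcal{O}_X(-B)$. Then
\[
c_1(L')=\{\alpha\}+\{T-[B]\},
\]
and $T-[B]\geq 0$ because $\nu_j\geq\lfloor\nu_j\rfloor$. So $L'$ is pseudo-effective. By monotonicity of positive intersection products, $\langle(\{\alpha\}+\beta)^k\rangle\geq\langle\{\alpha\}^k\rangle$ for $\beta$ pseudo-effective. This gives $\nd(L')\geq\nd(\{\alpha\})$. Corollary~\ref{Corollary:Corollary without multiplier} applied to $L'$ then yields
\[
H^n\bigl(X,\Omega^p_X(\log D)\otimes L'\otimes\mathcal{O}_X(-D)\bigr)=0\quad\text{for } p\geq n-\nd(\{\alpha\})+1 .
\]

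\emph{Step 3: comparing the cohomology.} Let $F=\Omega^p_X(\log D)\otimes\mathcal{O}_X(-D)$, which is locally free. Consider the exact sequence
\[
0\to F\otimes L\otimes\mathcal{I}(\psi)\to F\otimes L'\to \mathcal{Q}\to 0 ,
\]
where $\mathcal{Q}$ is coherent with support $Z$ equal to the cosupport of $\mathcal{I}(\psi)\subseteq\mathcal{O}_X(-B)$. If $\operatorname{codim}Z\geq 2$, then $H^{n-1}(X,\mathcal{Q})=0$. The long exact sequence then makes $H^n(F\otimes L\otimes\mathcal{I}(\psi))\to H^n(F\otimes L')$ injective, and the vanishing from Step 2 concludes.

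\emph{Main obstacle: showing $\operatorname{codim}Z\geq 2$.} It suffices to show that $\mathcal{I}(\psi)=\mathcal{O}_X(-B)$ near every point $x$ that lies on exactly one $E_j$, is a smooth point of it, and lies outside $\{\nu(R,\cdot)\geq c\}$ for a fixed small $c$. The excluded set is then a finite union of codimension $\geq 2$ analytic sets.

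Near such an $x$, choose coordinates with $E_j=\{z_1=0\}$ and write $\psi=\nu_j\log|z_1|+\varphi$ up to a smooth term. Here $\varphi$ is a potential of $R$ with Lelong numbers $<c$ on a neighbourhood of $x$. Take $f=z_1^{\lfloor\nu_j\rfloor}g$. Apply Hölder's inequality with exponents $r,r'$ chosen so that $r(\nu_j-\lfloor\nu_j\rfloor)<1$; this makes $|z_1|^{-2r(\nu_j-\lfloor\nu_j\rfloor)}$ locally integrable. Skoda's integrability theorem gives $e^{-2r'\varphi}\in L^1_{\loc}$ once $c<1/r'$. Together these give $|f|^2e^{-2\psi}\in L^1_{\loc}$ near $x$. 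The delicate point is the order of choices: pick $r$ first, then $c$ depending on $r'$. This is legitimate because $r$ depends only on the finitely many $\nu_j$ with $\nu_j-\lfloor\nu_j\rfloor\neq 0$ that meet a given compact region, so a single $c$ works for all of $X$.
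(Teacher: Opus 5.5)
Your proof is correct and has the same skeleton as the paper's. You replace $\mathcal{I}(\psi)$ by $\mathcal{O}_X(-B)$ with $B=\sum_j\lfloor\nu_j\rfloor E_j$ from the Siu decomposition, check that $L\otimes\mathcal{O}_X(-B)$ is pseudo-effective with $\nd\geq\nd(\{\alpha\})$, and invoke the main vanishing theorem. The differences are in how the two technical inputs are supplied.

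\begin{itemize}
\item The paper cites \cite[Proposition~3.2]{DP03} for ``$\mathcal{I}(\psi)\subset\mathcal{O}_X(-B)$ with equality off codimension two''. You reprove this by a Skoda--H\"older argument.
\item The paper removes $\mathcal{I}(\psi)$ by Serre duality for the coherent sheaf, $H^n(X,\mathcal{G})^*\cong\Hom(\mathcal{G},K_X)$, together with $\mathcal{I}(\psi)^\vee=(\mathcal{I}(\psi)^{\vee\vee})^\vee\cong\mathcal{O}_X(B)$, and so lands directly in Theorem~\ref{Main Theorem 1}. You instead use the long exact sequence and $H^{n-1}(X,\mathcal{Q})=0$, and land in Corollary~\ref{Corollary:Corollary without multiplier}.
\end{itemize}

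These are dual forms of the same fact: $\mathcal{E}xt^i(\mathcal{Q},K_X)=0$ for $i\leq1$ when $\operatorname{codim}\operatorname{Supp}\mathcal{Q}\geq2$. The paper's version is shorter; yours is self-contained modulo Skoda's theorem.

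Two details of your local argument need a small repair.

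First, the Siu decomposition may have infinitely many components. For example, $\sum_j2^{-j}[H_j]$ on $\mathbb{P}^n$ with distinct hyperplanes $H_j$ is a closed positive current of this kind.
\begin{itemize}
\item The points lying on two of the $E_j$, or on some $\operatorname{Sing}E_j$, therefore form only a countable union of codimension-two analytic sets, not a finite one.
\item Your justification of a uniform $r$ (``finitely many $\nu_j$ meeting a compact region'') is not right as stated.
\end{itemize}
Both are harmless. $\operatorname{Supp}\mathcal{Q}$ is analytic because $\mathcal{I}(\psi)$ is coherent, so by Baire's theorem an $(n-1)$-dimensional component of it cannot be covered by countably many proper analytic subsets. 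With this observation you may even let $r$ and $c$ depend on $j$, and the discussion of the order of choices becomes unnecessary.

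Second, for the same reason $\varphi$ must be a local potential of $T-\nu_j[E_j]$, not of $R$ alone, since the other $E_{j'}$ may accumulate at $x$. Its Lelong number at $x$ still equals $\nu(R,x)$, because Lelong numbers are additive over convergent sums of positive currents and $x\notin E_{j'}$. That pointwise bound is all Skoda's theorem requires.
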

Here the numerical dimension of a pseudo-effective class is different from the numerical dimension of a closed positive $(1,1)$-current given by Cao in \cite{Cao14}. 
In general, $\nd(\{T\})\geq\nd(T)$ for a closed positive $(1,1)$-current $T$.
We refer to \cite{Bou02,EV92,Gra15,LMNWZ25,Mou98,SS85,Wat23,Wat26,Wu20} for some generalizations of Bogomolov-type vanishing theorems.

Finally, as an application of Theorem~\ref{Main Theorem 1} and the results in \cite{CP16,IM22}, we obtain the following result.

\begin{theorem}
    Let $X$ be a compact K\"ahler manifold and $D$ a reduced simple normal crossing divisor on $X$.
    Assume that $K_X+D$ is nef.
    Then the following are equivalent:
    \begin{enumerate}
        \item $c_2(\Omega^1_X(\log D))=0 \text{ in } H^{2,2}(X,\mathbb{R})$.
        \item $\Omega^1_X(\log D)$ is nef and $\nd(K_X+D)\leq1$.
    \end{enumerate}

\end{theorem}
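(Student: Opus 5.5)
The plan is to prove the two implications separately. For (2)$\Rightarrow$(1) the tool is the structure theorem of Iwai and Matsumura for nef logarithmic cotangent bundles; for (1)$\Rightarrow$(2) the tools are the Miyaoka-type inequality for $K_X+D$ nef together with Theorem~\ref{Main Theorem 1}. Throughout, write $\mathcal{E}=\Omega^1_X(\log D)$, $n=\dim X$, and note $\det\mathcal{E}=K_X+D$, so $c_1(\mathcal{E})=c_1(K_X+D)$.

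\emph{(2)$\Rightarrow$(1).} Assume $\mathcal{E}$ is nef and $\nd(K_X+D)\le 1$. Then $c_1(\mathcal{E})^2=0$ numerically, because $\nd\le 1$ means $\{c_1(K_X+D)\}^2=0$ for a nef class. Next apply \cite{IM22}: a nef logarithmic cotangent bundle admits a filtration, after a finite étale cover or a torus-type quasi-étale splitting, whose graded pieces are hermitian flat or numerically flat, up to a piece whose determinant carries the positivity. Equivalently, use the Demailly--Peternell--Schneider inequality for nef bundles, $0\le c_2(\mathcal{E})\le c_1(\mathcal{E})^2$ against products of Kähler classes. If $c_1^2=0$, this forces $c_2(\mathcal{E})\cdot\omega^{n-2}=0$ and in fact $c_2=0$ in $H^{2,2}$, since nef bundles satisfy $0\le c_2\le c_1^2$ as positive classes in the sense of \cite{CP16}. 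This yields (1).

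\emph{(1)$\Rightarrow$(2).} Assume $c_2(\mathcal{E})=0$ and $K_X+D$ is nef.
\begin{enumerate}
\item Use generic semipositivity of $\mathcal{E}$ (Campana--P\u{a}un \cite{CP16}: $K_X+D$ pseudo-effective implies $\mathcal{E}$ is generically nef) to obtain the logarithmic Miyaoka inequality $\bigl(3c_2(\mathcal{E})-c_1(\mathcal{E})^2\bigr)\cdot\alpha^{n-2}\ge 0$ for nef $\alpha$.
\item Take $\alpha=c_1(K_X+D)+\varepsilon\omega$ and let $\varepsilon\to 0$. With $c_2=0$ this gives $c_1(K_X+D)^2\cdot\alpha^{n-2}\le 0$. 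Since $K_X+D$ is nef, the left side is nonnegative, hence it is $0$.
\item Conclude $\nd(K_X+D)\le 1$. If $\nd=\nu\ge 2$, expanding $c_1^2\cdot(c_1+\varepsilon\omega)^{n-2}$ gives a leading term of order $\varepsilon^{n-\nu}\, c_1^{\nu}\omega^{n-\nu}>0$, a contradiction.
\item Prove nefness of $\mathcal{E}$. Suppose $\mathcal{E}$ is not nef. Then $\mathcal{O}_{\mathbb{P}(\mathcal{E})}(1)$ fails to be nef, and the equality case of the Miyaoka inequality should produce a destabilizing quotient. Dually, one gets a subsheaf $L^{-1}\hookrightarrow\Omega^p_X(\log D)$, i.e.\ a rank-one subsheaf with $L$ pseudo-effective and $\nd(L)>p$.
\item Apply Theorem~\ref{Main Theorem 1} to forbid this subsheaf. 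Theorem~\ref{Main Theorem 1} is what controls these subsheaves in terms of numerical dimension, which lets the argument run in the Kähler setting.
\item Feed the resulting control on the Harder--Narasimhan pieces into the Iwai--Matsumura structure theorem to conclude that $\mathcal{E}$ is nef.
\end{enumerate}

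The main obstacle is step 4: turning $c_2=0$ plus $c_1^2=0$ into nefness of $\mathcal{E}$. I would first attempt it via the equality case of the Bogomolov--Gieseker/Miyaoka inequality. With respect to $K_X+D$ perturbed by $\omega$, this case should make the semistable graded pieces projectively flat. Theorem~\ref{Main Theorem 1} would then be used to show that no quotient of $\mathcal{E}$ has negative degree, since such a quotient would give a subsheaf $L^{-1}\subset\Omega^p_X(\log D)$ of the forbidden kind. Positive-degree pieces are then numerically $K_X+D$-like and hence nef, giving $\mathcal{E}$ nef as an extension of nef bundles.
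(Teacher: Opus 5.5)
\textbf{Main gap: nefness of $\mathcal{E}=\Omega^1_X(\log D)$ in (1)$\Rightarrow$(2), your steps 4--6.}

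\emph{Wrong sign.} You invoke Theorem~\ref{Main Theorem 1} with the wrong sign. It forbids a nonzero map $L\to\Omega^p_X(\log D)$ with $L$ pseudo-effective and $\nd(L)>p$. An inclusion $L^{-1}\hookrightarrow\Omega^p_X(\log D)$ with $L$ pseudo-effective is never obstructed.

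\emph{Even with the sign corrected.} The theorem still cannot exclude negative-degree quotients. A quotient $\mathcal{E}\twoheadrightarrow Q$ of rank $q$ with kernel $K$ gives $\det K\hookrightarrow\Omega^{n-q}_X(\log D)$ with $c_1(\det K)=c_1(K_X+D)-c_1(Q)$. But $c_1(Q)\cdot\{\eta\}^{n-1}<0$ for a K\"ahler class $\{\eta\}$ implies neither that $\det K$ is pseudo-effective nor that $\nd(\det K)>n-q$.

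\emph{More fundamentally.} ``No quotient of negative degree'' is just generic nefness. You already have it from \cite{CP16} in step 1, and it is far weaker than nefness. Your claims that non-nefness of $\mathcal{O}_{\mathbb{P}(\mathcal{E})}(1)$ ``should'' produce a destabilizing quotient via the equality case of Miyaoka, and that positive-degree graded pieces are ``$K_X+D$-like and hence nef'', are unsupported. Passing from generic nefness and $c_2=0$ to nefness is exactly the hard content of the Iwai--Matsumura structure theorem.

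\emph{How the paper closes this step.} It applies that theorem directly. Generic nefness of $\mathcal{E}$ with respect to every K\"ahler class (from \cite{CP16}), together with $c_2(\mathcal{E})=0$ and nefness of $K_X+D$, is fed into \cite[Theorem~1.6 and Theorem~5.7]{IM22}, which gives that $\mathcal{E}$ is nef. Theorem~\ref{Main Theorem 1} plays no role in that step.

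\textbf{The bound $\nd(K_X+D)\le1$ and the reverse implication.}

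Your steps 2--3, deriving $\nd(K_X+D)\le1$ from $3c_2\ge c_1^2$, are correct as computations. They would give a route different from the paper's. The paper instead extracts from \cite[Theorem~5.7]{IM22} a saturated line subsheaf $L\subset\mathcal{E}$ with $c_1(L)=c_1(K_X+D)$, so $L$ is nef with $\nd(L)=\nd(K_X+D)$. It then bounds $\nd(L)\le1$ by Proposition~\ref{Proposition:Numerical line subbundle principle}, i.e.\ by Theorem~\ref{Main Theorem 1} with $p=1$.

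However, your step 1 does not follow from generic semipositivity. Take $\mathcal{L}\oplus\mathcal{O}_X^{\oplus(n-1)}$ with $\mathcal{L}$ nef and $\nd(\mathcal{L})\ge2$: it is nef, has nef determinant and $c_2=0$, yet $(3c_2-c_1^2)\cdot\{\eta\}^{n-2}<0$. This is exactly the configuration that \cite{IM22} produces and that the paper excludes with Theorem~\ref{Main Theorem 1}. So a Miyaoka inequality for $\Omega^1_X(\log D)$ on a compact K\"ahler manifold must rest on input specific to logarithmic forms. You need to cite such a result or argue as the paper does.

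Finally, (2)$\Rightarrow$(1) is the paper's argument via \cite[Corollary~2.6]{DPS94}. ``Positive classes in the sense of \cite{CP16}'' does not justify upgrading $c_2\cdot\{\eta\}^{n-2}=0$ to $c_2=0$ in $H^{2,2}(X,\mathbb{R})$. The upgrade does hold, as follows. Let $\pi:\mathbb{P}(\mathcal{E})\to X$ and $\xi=c_1(\mathcal{O}_{\mathbb{P}(\mathcal{E})}(1))$, which is nef. Then $c_1^2-c_2=\pi_*\xi^{n+1}$ is a limit of classes of closed positive $(2,2)$-currents. So when $c_1^2=0$, the class $-c_2$ is represented by a closed positive current of mass $-c_2\cdot\{\eta\}^{n-2}\le0$, which must vanish.
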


The remaining part of this paper is organized as follows. 
In Section~\ref{Section:Preliminary}, we briefly recall some preliminary results. 
In Section~\ref{Section:Bogomolov--Sommese Vanishing}, we prove Theorem~\ref{Main Theorem 1} and Corollary~\ref{Corollary:Corollary with multiplier}.
We also provide a simple proof on projective manifolds via hyperplane induction.
In Section~\ref{Section:Applications}, we give several geometric consequences of the main theorem.

\section{Preliminaries} \label{Section:Preliminary}

In this section, we recall some basic results used in the proofs of the main results in the present paper.

\subsection{Sheaf of logarithmic differential forms}
We first recall here some basic properties of the sheaf of logarithmic differential forms (see \cite[Chapter 2]{EV92}).
A reduced divisor $D=\sum_{j=1}^{s}D_j$ on a compact K\"ahler manifold $X$ is called a simple normal crossing divisor if every irreducible component $D_j$ is smooth and all intersections are transverse.
The sheaf $\Omega_X^p(\log D)$ is the sheaf of germs of differential $p$-forms on $X$ with at most logarithmic poles along $D$.
Sections of the logarithmic sheaf $\Omega_X^p(\log D)$ on an open subset $U$ are 
$$\Gamma(U,\Omega_X^p(\log D)):=\{\alpha\in\Gamma(U,\Omega_X^p\otimes \mathcal{O}_X(D)) ~|~d\alpha\in\Gamma(U,\Omega_X^{p+1}\otimes \mathcal{O}_X(D)) \}.$$
It is easy to see that when $p=\dim X$, $\Omega_X^p(\log D)=K_X\otimes\mathcal{O}_X(D)$.
The sheaf $\Omega_X^p(\log D)$ is locally free. 
Let $L$ be a line bundle on $X$. Logarithmic Serre duality gives an isomorphism
$$H^q(X,\Omega_X^p(\log D)\otimes L)\cong H^{n-q}(X,\Omega_X^{n-p}(\log D)\otimes L^{-1}\otimes \mathcal{O}_X(D)^{-1})^*.$$

Set $Y=X\setminus D$.
We can choose a local coordinate chart $(W;z_1,\ldots,z_n)$ such that $D\cap W=\{z_1\cdots z_t=0 \}$ and $Y\cap W=W_r^\ast=(\Delta^\ast_r)^t\times(\Delta_r)^{n-t}$, 
where $\Delta_r$ is the open disk of radius $r$ centered at $0$ in $\mathbb{C}$ and $\Delta^\ast_r=\Delta_r\setminus\{0\}$. 
We consider a special K\"ahler metric on $Y$ with some asymptotic properties along $D$.
\begin{definition}
    We say that the metric $\omega_Y$ on $Y$ is of Poincar\'e type along $D$, if for each local coordinate chart $(W;z_1,\ldots,z_n)$ along $D$, $\omega_Y|_{W_r^\ast}$ is equivalent to the usual Poincar\'{e} type metric defined by 
    $$\omega_P=i\sum_{j=1}^{t}\frac{dz_j\wedge d\bar{z}_j}{|z_j|^2(\log|z_j|^2)^2}+i\sum_{j=t+1}^{n}dz_j\wedge d\bar{z}_j.$$
\end{definition}
According to \cite{Zuc79}, there is a K\"ahler metric $\omega_P$ on $Y$ which is of Poincar\'e type along $D$.

\subsection{Singular Hermitian metrics on line bundles}

Let $(X,\omega)$ be a Hermitian manifold of dimension $n$.
A singular Hermitian metric $h$ on a holomorphic line bundle $L\to X$ is simply a Hermitian metric which can be expressed locally as
$e^{-\varphi_U}$ on $U$ such that $\varphi_U$ is $L^1_\loc$, where
$U\subset X$ is a local coordinate chart such that $L|_U\simeq U\times\mathbb{C}.$
It has a well-defined curvature current $\image\Theta_{L,h}:=\ddbar\varphi_U.$

\begin{definition}
    A function $\varphi:X\rightarrow [-\infty,+\infty)$ is
    said to be quasi-plurisubharmonic (quasi-psh for short) if $\varphi$ is locally the sum of a plurisubharmonic function and a smooth function 
    (or equivalently, if $\ddbar\varphi$ is locally bounded from below). 

    If $\varphi$ is a quasi-psh function on $X$, the multiplier ideal sheaf $\mathcal{I}(\varphi)$ is the ideal subsheaf of $\mathcal{O}_X$ defined by
    $$\mathcal{I}(\varphi)_x=\{f\in\mathcal{O}_{X,x}~|~\exists\ U\ni x\  \text{such that}\ \int_U|f|^2e^{-\varphi}d\lambda<+\infty  \},$$
    where $U$ is an open coordinate neighborhood of $x$ and $d\lambda$ is the standard Lebesgue measure in $\mathbb{C}^n.$
    
    It is easy to see that associated to a singular Hermitian metric $h$ on $L$ satisfying
    $\image\Theta_{L,h}\geq\gamma$ 
    for some smooth real $(1,1)$-form $\gamma$ in the sense of currents, 
    there is a well-defined multiplier ideal sheaf $\mathcal{I}(h)$ on $X$ and $\mathcal{I}(h)$ is coherent (\cite{Nad90}).
\end{definition}

We now recall some definitions of positivity of $(1,1)$-classes and line bundles (see \cite{DemSmall}).
\begin{definition}
    Let $(X,\omega)$ be a compact K\"ahler manifold of dimension $n$ and $\{\alpha\}\in H^{1,1}(X,\mathbb{R})$ a $(1,1)$-class on $X$. Let $L$ be a line bundle on $X$.
    \begin{itemize}
        \item[(1)] $\{\alpha\}$ is said to be nef if for every $\varepsilon>0$, 
        there exists a smooth function $\varphi_\varepsilon$ on $X$ such that $\alpha+\ddbar \varphi_\varepsilon\geq-\varepsilon\omega$.
        One defines the numerical dimension of $\{\alpha\}$ to be
        $$\nd(\{\alpha\})=\max\{k=0,\ldots,n~|~\{\alpha\}^k\neq0 \text{ in } H^{2k}(X,\mathbb{R})\}.$$
        $L$ is said to be nef if $c_1(L)$ is a nef class.
        In this case, we define its numerical
        dimension by $\nd(L)=\nd(c_1(L))$.
        A vector bundle $E$ is called nef if the line bundle $\mathcal{O}_E(1)$ on $\mathbb{P}(E)$, the projective bundle of hyperplanes in the fibres of $E$, is nef.
        \item[(2)] $\{\alpha\}$ is said to be pseudo-effective if there is a quasi-psh function $\varphi$ such that $\alpha+\ddbar\varphi\geq 0$ in the sense of currents.
        One defines the numerical dimension of $\{\alpha\}$ to be
        $$\nd(\{\alpha\})=\max\{k=0,\ldots,n~|~\langle \{\alpha\}^k \rangle\neq0 \}.$$
        Here $\langle \{\alpha\}^k \rangle:=\lim\limits_{\delta\to0}\{\langle T^k_{\min,\delta\omega} \rangle\}$ 
        where $T^k_{\min,\delta\omega}$ is the positive current with minimal singularity in the class $\{\alpha+\delta\omega\}$ and $\langle T^k_{\min,\delta\omega} \rangle$ is the non-pluripolar product.
        We refer to \cite{BEGZ10} for more precise argument.
        $L$ is said to be pseudo-effective if $c_1(L)$ is a pseudo-effective class. 
        In this case, we define its numerical
        dimension by $\nd(L)=\nd(c_1(L))$, and there is a singular Hermitian metric $h$ on $L$ such that $\image\Theta_{L,h}\geq 0$ in the sense of currents.
        \item[(3)] $\{\alpha\}$ is said to be big if there is a quasi-psh function $\varphi$ such that $\alpha+\ddbar\varphi\geq \varepsilon\omega$ for some positive continuous function $\varepsilon$ in the sense of currents.
        $L$ is said to be big if $c_1(L)$ is a big class. 
        In this case, there is a singular Hermitian metric $h$ on $L$ such that $\image\Theta_{L,h}\geq \varepsilon\omega$ in the sense of currents. 
        Note that a nef line bundle is big if its numerical dimension is $n$.
    \end{itemize}
\end{definition}

If $L$ is pseudo-effective line bundle over $X$, there is an
equivalent definition of the numerical dimension of the line bundle $L$ (see \cite[Proposition 2]{Wu20}):
    \begin{align*}
        \nd(L):=\max_k\Bigl\{
        &\text{ there exists a constant }c>0, \text{ such that for any } \varepsilon>0,\\
        &\text{ there is a singular metric }h_\varepsilon  \text{ with analytic singularities satisfying }\\
        &\image\Theta_{L,h_\varepsilon}\geq-\varepsilon\omega \text{ and } 
        \int_{X\setminus Z_\varepsilon}(\image\Theta_{L,h_\varepsilon}+\varepsilon\omega)^k\wedge\omega^{n-k}\geq c
        \Bigr\},
    \end{align*}
    where $Z_\varepsilon$ is the singular set of $h_\varepsilon$.
Then we can obtain well-behaved metrics on log resolutions.
\begin{lemma}\label{Lemma:Construct suitable metrics}
    Let $(X,\omega)$ be a compact K\"ahler manifold of dimension $n$ and $L$ a pseudo-effective line bundle over $X$ with $\nd(L)=k>p$.
    Let $D$ be a reduced simple normal crossing divisor on $X$.
    There exist constants $c_0,C_0>0$ such that for every $\varepsilon$ sufficiently small, we have the following properties.
    \begin{enumerate}
        \item There exists a log resolution 
        $$\pi_\varepsilon:\widetilde{X}_\varepsilon\longrightarrow X$$
        and a reduced simple normal crossing divisor $B_\varepsilon$ such that $B_\varepsilon$ contains the total transform of $D$ and the exceptional divisor.
        Moreover, $\pi_\varepsilon$ induces a natural morphism from $\Omega^p_X(\log D)$ to $\Omega^p_{\widetilde{X}_\varepsilon}(\log B_\varepsilon)$.
        \item There exists a singular Hermitian metric $\widetilde{h}_\varepsilon$ on line bundle $\pi^*_\varepsilon L$ 
        with divisorial singularities supported on $B_\varepsilon$ and smooth on $$Y_\varepsilon:=\widetilde{X}_\varepsilon\setminus B_\varepsilon.$$
        \item There exist K\"ahler forms $\gamma_\varepsilon$ and $\Omega_\varepsilon$ on $\widetilde{X}_\varepsilon$ such that, on $Y_\varepsilon$,
        \begin{equation}
            \image\Theta_{\pi^*_\varepsilon L,\widetilde{h}_\varepsilon}=\gamma_\varepsilon-3\varepsilon\pi^*_\varepsilon\omega 
            \quad \text{and} \quad \Omega_\varepsilon\geq \pi^*_\varepsilon\omega.
        \end{equation}
        \item The following uniform estimates hold:
        \begin{equation}
            V_\varepsilon:=\int_{\widetilde{X}_\varepsilon}\gamma_\varepsilon^n\geq c_0\varepsilon^{n-k} 
            \quad \text{and} \quad
            \int_{\widetilde{X}_\varepsilon}\gamma_\varepsilon^{p}\wedge\Omega_\varepsilon^{n-p}\leq C_0.
        \end{equation}
    \end{enumerate}
\end{lemma}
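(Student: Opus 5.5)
Fix $\varepsilon>0$ small. The plan is to start from the characterization of $\nd(L)$ recalled just before the lemma (\cite[Proposition 2]{Wu20}) and to resolve the singularities of the metric it provides.

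\textbf{Step 1: choice of metric.} That characterization gives a constant $c>0$ and a singular metric $h_\varepsilon$ on $L$ with analytic singularities such that
\[
\image\Theta_{L,h_\varepsilon}\geq-\varepsilon\omega
\quad\text{and}\quad
\int_{X\setminus Z_\varepsilon}(\image\Theta_{L,h_\varepsilon}+\varepsilon\omega)^k\wedge\omega^{n-k}\geq c .
\]

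\textbf{Step 2: resolution, items (1) and (2).} Take a log resolution $\pi_\varepsilon$ of the ideal defining the singularities of $h_\varepsilon$, chosen so that it also makes $Z_\varepsilon\cup D$ simple normal crossing. Let $B_\varepsilon$ be the reduced divisor supported on $\pi_\varepsilon^{-1}(D)$, the exceptional locus and $\pi_\varepsilon^{-1}(Z_\varepsilon)$. Pulling back gives a holomorphic map from logarithmic forms along $D$ to logarithmic forms along $B_\varepsilon$, because $\pi_\varepsilon^{-1}(D)\subset B_\varepsilon$; this is item (1). Locally the weight of $\pi_\varepsilon^*h_\varepsilon$ is $\sum c_j\log|s_j|^2$ plus a smooth function. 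Put $\widetilde h_\varepsilon=\pi_\varepsilon^*h_\varepsilon$. Its curvature on $Y_\varepsilon$ is
\[
\beta_\varepsilon:=\pi_\varepsilon^*\image\Theta_{L,h_\varepsilon}-[E_\varepsilon],
\]
a smooth form on $\widetilde X_\varepsilon$ satisfying $\beta_\varepsilon\geq-\varepsilon\pi_\varepsilon^*\omega$. Here $E_\varepsilon$ is the divisorial part, which is supported on $B_\varepsilon$. This gives item (2).

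\textbf{Step 3: Kähler forms, item (3).} Set
\[
\gamma_\varepsilon:=\beta_\varepsilon+3\varepsilon\pi_\varepsilon^*\omega+\eta_\varepsilon\geq 2\varepsilon\pi_\varepsilon^*\omega+\eta_\varepsilon .
\]
The correction $\eta_\varepsilon$ is needed because $\pi_\varepsilon^*\omega$ is only semipositive, so $\gamma_\varepsilon$ must be made strictly positive. Choose $\eta_\varepsilon=\delta\,\ddbar$ of the pullback of a weight, so that $\pi_\varepsilon^*\omega+\delta\,\ddbar(\ldots)$ is Kähler, with $\delta$ tiny. Equivalently, $\pi_\varepsilon^*\omega-\delta[F]$ is Kähler for an exceptional effective divisor $F$. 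Realize $\delta[F]$ by a smooth representative and absorb it into the metric by twisting $\widetilde h_\varepsilon$ with $|s_F|^{2\delta}$. This keeps the singularities divisorial and supported on $B_\varepsilon$, and the curvature identity on $Y_\varepsilon$ becomes exactly $\gamma_\varepsilon-3\varepsilon\pi_\varepsilon^*\omega$. Then define $\Omega_\varepsilon:=\pi_\varepsilon^*\omega+\delta'\theta_\varepsilon$ with $\theta_\varepsilon$ Kähler, so that $\Omega_\varepsilon\geq\pi_\varepsilon^*\omega$ holds after adjusting; the clean version is to let $\Omega_\varepsilon$ be $\pi_\varepsilon^*\omega$ plus the same small positive correction used in $\gamma_\varepsilon$. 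All of these perturbations can be taken as small as we like in cohomology.

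\textbf{Step 4: estimates, item (4).} For the lower bound, $\gamma_\varepsilon\geq\beta_\varepsilon+\varepsilon\pi_\varepsilon^*\omega$ is a wedge of positive forms, so
\[
\int\gamma_\varepsilon^n\geq\int(\beta_\varepsilon+\varepsilon\pi_\varepsilon^*\omega)^k\wedge(2\varepsilon\pi_\varepsilon^*\omega)^{n-k}.
\]
Off the exceptional set this equals $(2\varepsilon)^{n-k}$ times the integral in Step 1, giving $V_\varepsilon\geq c_0\varepsilon^{n-k}$ with $c_0=c\,2^{n-k}$.

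For the upper bound, all forms involved are closed, so the integral is cohomological. The class $\{\beta_\varepsilon\}=\pi_\varepsilon^*c_1(L)-\{E_\varepsilon\}$ with $E_\varepsilon$ effective, so $\{\gamma_\varepsilon\}\leq\pi_\varepsilon^*(c_1(L)+3\varepsilon\{\omega\})$ up to the small corrections. Mixed products of a closed positive form with a nef class are monotone under subtracting effective divisors. I would argue this via $\int\gamma^p\wedge\Omega^{n-p}=\int\gamma^{p-1}\wedge(\pi^*\alpha-[E])\wedge\Omega^{n-p}$ and positivity of $\gamma^{p-1}\wedge\Omega^{n-p}$ restricted to $E$, inductively. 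This yields
\[
\int\gamma_\varepsilon^p\wedge\Omega_\varepsilon^{n-p}\leq\bigl((c_1(L)+\{\omega\})+C\{\omega\}\bigr)^p\cdot\{\omega\}^{n-p}+o(1)=:C_0 .
\]
Here $c_1(L)+A\{\omega\}$ is written using a fixed smooth form $\alpha$ with $\alpha+A\omega\geq0$.

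\textbf{Main obstacle.} I expect the hardest step to be this uniform upper bound. The induction over removing effective divisors needs the restriction terms $\int_E\gamma^{p-1}\wedge\Omega^{n-p}\geq0$, which hold because $\gamma$ and $\Omega$ are Kähler. It also needs every perturbation to be chosen with $\varepsilon$-independent control.
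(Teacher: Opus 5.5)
Your proposal is correct. For items (1)--(3) and the lower bound $V_\varepsilon\geq c_0\varepsilon^{n-k}$ it follows the paper's argument. You resolve the analytic singularities of the almost-positive metric from Wu's characterization and subtract a small multiple $\kappa F$ of an exceptional divisor to reach a Kähler class. You then absorb that divisor and its potential into the metric, and take $\Omega_\varepsilon=\pi_\varepsilon^*\omega+t\,\widetilde\omega_\varepsilon$.

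You genuinely differ on the uniform upper bound for $\int\gamma_\varepsilon^p\wedge\Omega_\varepsilon^{n-p}$. The paper compares $A_\varepsilon$ with a current of minimal singularities in $c_1(L)+3\varepsilon\{\omega\}$. It invokes the monotonicity of non-pluripolar masses from \cite{BEGZ10} and the convergence built into the definition of $\langle c_1(L)^p\rangle$, then transfers the bound to $\gamma_\varepsilon$ by continuity. You instead write $\{\gamma_\varepsilon\}=\{P\}-\{E'\}-(A-3\varepsilon)\pi_\varepsilon^*\{\omega\}$. Here $P=\pi_\varepsilon^*(\alpha+A\omega)\geq0$ is a fixed semipositive form, with $\alpha$ a smooth representative of $c_1(L)$, and $E'=E_\varepsilon+\kappa F$ is effective. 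You then peel off one factor of $\gamma_\varepsilon$ at a time. Your route is elementary (Stokes plus semipositivity) and gives the cruder but $\varepsilon$-independent constant $\{\alpha+A\omega\}^p\cdot\{\omega\}^{n-p}+1$. It also makes clear that $\nd(L)$ enters only through the lower bound, and a uniform constant is all the proof of Theorem~\ref{Main Theorem 1} needs. The paper's route gives the sharper asymptotic constant $\langle c_1(L)^p\rangle\cdot\{\omega\}^{n-p}$, at the cost of the pluripotential machinery.

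A few points need tightening.

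\textbf{The induction for the upper bound.} At the $j$-th step you discard $\int_{E'}\gamma_\varepsilon^{p-j-1}\wedge P^{j}\wedge\Omega_\varepsilon^{n-p}$ and $(A-3\varepsilon)\int\gamma_\varepsilon^{p-j-1}\wedge P^{j}\wedge\pi_\varepsilon^*\omega\wedge\Omega_\varepsilon^{n-p}$. Their nonnegativity needs $P\geq0$, not only the Kählerness of $\gamma_\varepsilon$ and $\Omega_\varepsilon$ that you cite. So the induction must be run with $P$, not with $\pi_\varepsilon^*$ of the merely pseudo-effective class $c_1(L)+3\varepsilon\{\omega\}$. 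With the latter, the restriction terms to $E'$ need not be nonnegative once $p\geq2$.

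\textbf{The lower bound.} The inequality $\int\gamma_\varepsilon^n\geq\int(\beta_\varepsilon+\varepsilon\pi_\varepsilon^*\omega)^k\wedge(2\varepsilon\pi_\varepsilon^*\omega)^{n-k}$ does not follow pointwise, since it would require $\eta_\varepsilon\geq0$. Instead, split $\gamma_\varepsilon=(\beta_\varepsilon+\varepsilon\pi_\varepsilon^*\omega)+(2\varepsilon\pi_\varepsilon^*\omega+\eta_\varepsilon)$ into two semipositive pieces. Then let $\kappa\to0$ in cohomology, losing a factor $\tfrac12$, as the paper does.

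\textbf{The correction term.} $\eta_\varepsilon$ cannot be $\ddbar$ of a smooth function, because $\pi_\varepsilon^*\{\omega\}$ is not a Kähler class, so only your divisor version works. The twist should be $\widetilde h_\varepsilon|s_F|^{-2\kappa}$, i.e.\ weight $+\kappa\log|s_F|^2$. Then the curvature on $Y_\varepsilon$ acquires $-\kappa$ times the curvature form of $\mathcal{O}(F)$.
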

\begin{proof}
    By the equivalent definition of $\nd(L)$, there is a constant $c_{1}>0$ such that 
    for every $\varepsilon$ sufficiently small, there is a singular metric $h_\varepsilon$ with analytic singularities satisfying
    \begin{equation*}
        T_\varepsilon:=\image\Theta_{L,h_\varepsilon}+2\varepsilon\omega\geq 0 
        \quad \text{and} \quad
        \int_{X\setminus Z_\varepsilon} T_\varepsilon^k\wedge\omega^{n-k}\geq c_1>0,
    \end{equation*}
    where $Z_\varepsilon$ is the singular set of $h_\varepsilon$.
    On the other hand, consider the current $A_\varepsilon:=T_\varepsilon+\varepsilon\omega$.
    A current with minimal singularities in class $2\pi c_1(L)+3\varepsilon[\omega]$ is less singular than $A_\varepsilon$.
    Then by the definition of numerical dimension of $c_1(L)$ and the monotonicity of non-pluripolar
    products (refer to Theorem 1.16 and the explanations that follows Definition 1.17 in \cite{BEGZ10}), there is a constant $C_1$ such that
    \begin{equation}
        \int_{X\setminus Z_\varepsilon} A_\varepsilon^p\wedge\omega^{n-p}\leq C_1.
    \end{equation}
    We also have 
    \begin{equation}
        \int_{X\setminus Z_\varepsilon} A_\varepsilon^n\geq 
        \binom{n}{k}\varepsilon^{n-k}\int_{X\setminus Z_\varepsilon} T_\varepsilon^k\wedge\omega^{n-k}\geq c_2\varepsilon^{n-k}.
    \end{equation}
    
    Taking a suitable log resolution of $(X,D)$ and the singularities of $h_\varepsilon$, we obtain
    \begin{equation}
        \pi_\varepsilon:\widetilde{X}_\varepsilon\longrightarrow X 
        \quad \text{with} \quad \pi^*_\varepsilon A_\varepsilon=[E_\varepsilon]+\beta_\varepsilon
        \quad \text{and } \beta_\varepsilon\geq\varepsilon\pi^*_\varepsilon\omega,
    \end{equation}
    where $E_\varepsilon$ is an effective divisor and $\beta_\varepsilon$ is a smooth closed semi-positive $(1,1)$-form.
    Then for any $0\leq q\leq n$, we have 
    \begin{equation}
        \int_{\widetilde{X}_\varepsilon}\beta_\varepsilon^q\wedge(\pi^*_\varepsilon\omega)^{n-q}=
        \int_{X\setminus Z_\varepsilon}A_\varepsilon^q\wedge\omega^{n-q}.
    \end{equation}
    Denote by $F_{\varepsilon,1},\ldots,F_{\varepsilon,N_\varepsilon}$ the exceptional prime divisors.
    For sufficiently small $a_{\varepsilon,j}>0$, the class $[\beta_\varepsilon]-\sum_{j}a_{\varepsilon,j}[F_{\varepsilon,j}]$ is a K\"ahler class.
    Hence we can choose $a_{\varepsilon,j}$ so small that there is a K\"ahler representative $\gamma_\varepsilon$ satisfying
    \begin{equation}
        \int_{\widetilde{X}_\varepsilon}\gamma_\varepsilon^n\geq\frac{1}{2}\int\beta_\varepsilon^n\geq\frac{c_2}{2}\varepsilon^{n-k}
        \quad\text{and}\quad
        \int_{\widetilde{X}_\varepsilon}\gamma_\varepsilon^p\wedge\pi^*_\varepsilon\omega^{n-p}\leq C_1+1.
    \end{equation}
    Let $\widetilde{\omega}_\varepsilon$ be a fixed K\"ahler metric on $\widetilde{X}_\varepsilon$.
    Then for any $\varepsilon>0$, there exists a constant $t=t(\varepsilon)$ small enough such that $\Omega_\varepsilon:=\pi^*_\varepsilon\omega+t\widetilde{\omega}_\varepsilon$ is a K\"ahler metric and
    \begin{equation}
        \int_{\widetilde{X}_\varepsilon}\gamma_\varepsilon^p\wedge\Omega_\varepsilon^{n-p}\leq C_1+2.
    \end{equation}
    
    We finally construct the metric $\widetilde{h}_\varepsilon$.
    Let $g_{\varepsilon,j}$ be a fixed smooth Hermitian metric on $\mathcal{O}_{\widetilde{X}_\varepsilon}(F_{\varepsilon,j})$ and $s_{\varepsilon,j}$ the canonical section of $\mathcal{O}_{\widetilde{X}_\varepsilon}(F_{\varepsilon,j})$.
    Since $\gamma_\varepsilon$ is a K\"ahler representative of $[\beta_\varepsilon]-\sum_{j}a_{\varepsilon,j}[F_{\varepsilon,j}]$,
    there is a smooth function $\phi_\varepsilon$ such that
    \begin{equation}
        \gamma_\varepsilon=\beta_\varepsilon-\frac{\sqrt{-1}}{2\pi}\sum_{j}a_{\varepsilon,j}\Theta_{g_{\varepsilon,j}}+\ddbar\phi_\varepsilon.
    \end{equation}
    Define a global quasi-plurisubharmonic function
    $$\chi_\varepsilon:=\phi_\varepsilon+\frac{1}{2\pi}\sum_{j}a_{\varepsilon,j}\log|s_{\varepsilon,j}|^2_{g_{\varepsilon,j}}$$
    and a singular Hermitian metric on $\pi^*_\varepsilon L$ by
    $$\widetilde{h}_\varepsilon:=\pi^*_\varepsilon h_\varepsilon\cdot e^{-\chi_\varepsilon}.$$
    Then by Poincar\'e--Lelong formula, 
    \begin{align*}
        \image\Theta_{\pi^*_\varepsilon L,\widetilde{h}_\varepsilon}&=\pi^*_\varepsilon\image\Theta_{L,h_\varepsilon}+\ddbar\chi_\varepsilon \\
        &=\pi^*_\varepsilon(A_\varepsilon-3\varepsilon\omega)+\ddbar\phi_\varepsilon+\sum_{j}a_{\varepsilon,j}[F_{\varepsilon,j}]-\frac{\sqrt{-1}}{2\pi}\sum_{j}a_{\varepsilon,j}\Theta_{g_{\varepsilon,j}} \\
        &=[E_\varepsilon]+\sum_{j}a_{\varepsilon,j}[F_{\varepsilon,j}]+\gamma_\varepsilon-3\varepsilon\pi^*_\varepsilon\omega.
    \end{align*}
    In particular, 
    \begin{equation}
        \image\Theta_{\pi^*_\varepsilon L,\widetilde{h}_\varepsilon}=\gamma_\varepsilon-3\varepsilon\pi^*_\varepsilon\omega
    \end{equation}
    on $Y_\varepsilon$.
    
    This completes the proof of Lemma~\ref{Lemma:Construct suitable metrics}.
\end{proof}

Let us recall an important property of multiplier ideal sheaves used by Demailly and Peternell to finish this subsection.
\begin{proposition}[{\cite[Proposition~3.2]{DP03}}]\label{Proposition:Demailly--Peternell}
    Let $(L,h)$ be a pseudo-effective line bundle over a compact K\"ahler manifold $X$.
    Let $$\frac{\sqrt{-1}}{2\pi}\Theta_{L,h}=\sum_{j=1}^{\infty}\lambda_jD_j+R$$
    be the Siu decomposition of the closed positive $(1,1)$-current $\frac{\image}{2\pi}\Theta_{L,h}$ as a countable sum of effective divisors and of a $(1,1)$-current $R$ such that the Lelong sublevel sets $E_c(R)$, $c>0$, all have codimension two.
    Then we have the inclusion of sheaves
    $$\mathcal{I}(h)\subset\mathcal{O}_X(-\sum_{j}\lfloor \lambda_j \rfloor D_j),$$
    and equality holds on $X\setminus Z$ where $Z$ is an analytic subset of $X$ whose components all have codimension at least two.
\end{proposition}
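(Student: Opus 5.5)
The plan is to prove the two assertions separately, working locally near a point $x\in X$. Write $h=e^{-\varphi}$ with $\varphi$ quasi-psh on a small coordinate ball $U\ni x$. Let $g_j$ be a local defining function of $D_j$ (irreducible factor in $\mathcal{O}_{X,x}$).

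\emph{Inclusion.} First I will note that only finitely many $D_j$ with $\lambda_j\geq 1$ meet a relatively compact neighborhood of $x$. This follows from the mass bound $\int_U \frac{\image}{2\pi}\Theta_{L,h}\wedge\omega^{n-1}<\infty$ together with the fact that each $D_j$ through $U$ has mass bounded below near $x$. Since $\frac{\image}{2\pi}\Theta_{L,h}-\sum_j\lambda_j[D_j]=R\geq 0$, the Poincar\'e--Lelong formula shows that $\varphi-\sum_j\lambda_j\log|g_j|^2$ is (quasi-)psh, hence bounded above on a smaller ball. Thus
\[
e^{-\varphi}\geq C\prod_j|g_j|^{-2\lambda_j}.
\]
Now let $f\in\mathcal{I}(h)_x$ and fix $j$. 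At a generic smooth point $y$ of $D_j$ (off the other $D_i$), choose coordinates with $g_j=z_1$. Integrability of $|f|^2|z_1|^{-2\lambda_j}$ then forces $f$ to vanish to order $\geq\lfloor\lambda_j\rfloor$ along $z_1=0$. This is the elementary one-variable computation: expand $f$ in powers of $z_1$ and use Fubini. Hence $f/g_j^{\lfloor\lambda_j\rfloor}$ is holomorphic off a codimension-two set, so holomorphic by Riemann extension. Since the $g_j$ are pairwise non-associate primes in the UFD $\mathcal{O}_{X,x}$, $f$ is divisible by $\prod_j g_j^{\lfloor\lambda_j\rfloor}$. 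This gives $\mathcal{I}(h)\subset\mathcal{O}_X(-\sum_j\lfloor\lambda_j\rfloor D_j)$.

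\emph{Equality off $Z$.} Consider the closed positive current
\[
T:=R+\sum_j\{\lambda_j\}[D_j],
\]
where $\{\lambda_j\}=\lambda_j-\lfloor\lambda_j\rfloor$, so that $\frac{\image}{2\pi}\Theta_{L,h}=\sum_j\lfloor\lambda_j\rfloor[D_j]+T$. I set $Z:=E_1(T)$, which is analytic by Siu's theorem. To see that $Z$ has no divisorial component, argue by contradiction. Such a component $V$ would carry generic Lelong number $\geq1$. Along $V$, the generic Lelong number of $T$ is the generic Lelong number of $R$ (which is $0$, since the sets $E_c(R)$ have codimension $\geq 2$) plus $\{\lambda_j\}$ if $V=D_j$ and $0$ otherwise. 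This is $<1$, a contradiction. So every component of $Z$ has codimension $\geq2$.

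Off $Z$, locally $\varphi=\sum_j\lfloor\lambda_j\rfloor\log|g_j|^2+\psi$ with $\psi$ a local potential of $T$ and $\nu(\psi,y)<1$. Skoda's integrability lemma then gives $e^{-\psi}\in L^1_{\loc}$. Hence for $f=u\prod g_j^{\lfloor\lambda_j\rfloor}$ we get $|f|^2e^{-\varphi}=|u|^2e^{-\psi}$, which is locally integrable. This proves the reverse inclusion there.

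The main technical point I expect is handling the countable sum in the Siu decomposition. One must check that the local potential of $\sum_j\{\lambda_j\}[D_j]$ converges and that the Lelong-number bookkeeping along each $D_j$ is correct, i.e.\ that generic Lelong numbers of $T$ add as claimed. I would handle this using the local finiteness of the total mass and the additivity of Lelong numbers for positive currents.
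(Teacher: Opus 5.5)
The paper gives no proof of this proposition; it is quoted from \cite[Proposition~3.2]{DP03} and used as a black box. Your proposal is a correct, self-contained proof.

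Your two steps are as follows. The inclusion comes from the upper bound $\varphi\leq\lambda_j\log|g_j|^2+C$, which is forced by $\frac{\image}{2\pi}\Theta_{L,h}-\lambda_j[D_j]\geq0$, followed by the one-variable vanishing-order computation. Equality off $Z=E_1(T)$, with $T=R+\sum_j\{\lambda_j\}[D_j]$, comes from Siu's analyticity theorem and Skoda's lemma.

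The other common route uses Nadel's coherence theorem to write $\mathcal{I}(h)=\mathcal{O}_X(-\Delta)\cdot\mathcal{J}$, with the cosupport of $\mathcal{J}$ of codimension at least two. One then identifies $\Delta$ by computing $\mathcal{I}(h)$ at very general points of each prime divisor, where the residual potential has zero Lelong number, so Skoda plus H\"older applies. Your version avoids coherence and produces an explicit exceptional set.

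A few details should be tightened; none of them affects validity.
\begin{itemize}
\item Avoid the infinite product $\prod_j|g_j|^{-2\lambda_j}$, since summing $\lambda_j\log|g_j|^2$ over infinitely many $j$ needs normalized $g_j$ to converge. It suffices to subtract one $\lambda_j[D_j]$ at a time, or the finitely many with $\lambda_j\geq1$, because the remainder is still positive.
\item A globally irreducible $D_j$ can be locally reducible at $x$. So take $g_j$ to be a reduced local equation rather than a prime of $\mathcal{O}_{X,x}$, and run the vanishing-order argument on each local branch. Distinct $D_j$ share no local branch, by the identity principle for irreducible analytic sets. Hence the $g_j$ are pairwise coprime and divisibility by $\prod_j g_j^{\lfloor\lambda_j\rfloor}$ follows.
\item You need the countable additivity $\nu(T,y)=\nu(R,y)+\sum_k\{\lambda_k\}\nu([D_k],y)$, because infinitely many $D_k$ may accumulate at $y$ without passing through it. It follows from monotonicity in $r$ of the mass ratios plus dominated convergence, using finite mass on a fixed ball. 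Then choose $y\in V$ outside $\operatorname{Sing}V$, outside $E_c(R)\cap V$ with $c=1-\{\lambda_{j_0}\}$ (or $c=1$ if $V$ is not a $D_j$), and outside the countably many proper subsets $V\cap D_k$. Such a $y$ exists by Baire, and it gives $\nu(T,y)<1$, as you claim.
\end{itemize}
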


\subsection{Degenerate Monge--Amp\`ere equation}

Solving Monge--Amp\`ere equations is nowadays a standard technique when considering questions related to numerical dimensions.
We use Demailly and Pali's result (reformulated and simplified here) on the existence and higher order regularity of solutions to degenerate Monge--Amp\`ere equations.
\begin{theorem}[{\cite[Theorem~6.1,\,6.2]{DP10}}]\label{Theorem:Demailly--Pali}
    Let $X$ be a compact connected K\"ahler manifold of complex dimension $n\geq2$, $\omega$ be a smooth K\"ahler form and $\Omega^n$ be a smooth volume form.
    Consider also $\sigma_j\in H^0(X,E_j)$, $\tau_r\in H^0(X,F_r)$, $j=1,\ldots,N$, $r=1,\ldots,M$ 
    be non-identically zero holomorphic sections of some holomorphic vector bundles over $X$,  such that the integral condition
    \begin{equation}
        \int_X\prod_{j=1}^N|\sigma_j|^{2l_j}\prod_{r=1}^{M}|\tau_r|^{-2h_r}\Omega^n=\int_X\omega^n
    \end{equation}
    holds for certain real numbers $l_j\geq0,h_r\geq0$.
    Then there exists a unique solution $\varphi$ of the degenerate complex Monge–Amp\`ere equation
    \begin{equation}
        (\omega+\ddbar\varphi)^n=e^{\lambda\varphi}\prod_{j=1}^N|\sigma_j|^{2l_j}\prod_{r=1}^{M}|\tau_r|^{-2h_r}\Omega^n, \quad \lambda\geq0,
    \end{equation}
    which in the case $\lambda=0$ is normalized by $\sup_X\varphi=0$.
    Consider the complex analytic sets
    $$S_1:=\bigcup_r\{\tau_r=0\}, \quad S_2=S_1\cup \left(\bigcup_j \{\sigma_j=0\}\right). $$
    Then $\varphi\in C^0(X)\cap C^{1,1}(X\setminus S_1) \cap C^{\infty}(X\setminus S_2)$.
    In particular, $\varphi\in L^\infty(X)$.
    
    Moreover, denote $f=\prod_{j=1}^N|\sigma_j|^{2l_j}\prod_{r=1}^{M}|\tau_r|^{-2h_r}$.
    For $\delta>0$ satisfying $f\in L\log^{n+\delta}L(X)$,
    the solution $\varphi$ satisfies
    \begin{equation}
        \|\varphi\|_{L^\infty(X)}\leq C(\delta,\omega,\Omega^n)I_{\omega,\delta}(f)^{\frac{n}{\delta}}+1,
    \end{equation}
    where 
    $$I_{\omega,\delta}(f):=\{\omega\}^{-n}\int_Xf\log^{n+\delta}(e+\{\omega\}^{-n}f)\Omega^n.$$
    
\end{theorem}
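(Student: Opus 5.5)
The statement is quoted from \cite{DP10}, and I would prove it by \emph{smoothing the right-hand side and passing to a limit}. Write $f=\prod_j|\sigma_j|^{2l_j}\prod_r|\tau_r|^{-2h_r}$. For $\epsilon>0$ set
$$f_\epsilon=c_\epsilon\prod_j(|\sigma_j|^2+\epsilon^2)^{l_j}\prod_r(|\tau_r|^2+\epsilon^2)^{-h_r},$$
with $c_\epsilon\to1$ chosen so that $\int_Xf_\epsilon\Omega^n=\int_X\omega^n$. These are smooth and strictly positive, so the Aubin--Yau theorem gives smooth solutions $\varphi_\epsilon$ of
$$(\omega+\ddbar\varphi_\epsilon)^n=e^{\lambda\varphi_\epsilon}f_\epsilon\Omega^n,$$
normalized by $\sup\varphi_\epsilon=0$ when $\lambda=0$. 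The integrability hypothesis is only needed when $\lambda=0$. When $\lambda>0$ the maximum principle already bounds $\varphi_\epsilon$ from above by $\lambda^{-1}\log\sup f_\epsilon$, which is harmless only when there are no poles; in general the lower bound again comes from the pluripotential estimate in the next paragraph.

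\textbf{Uniform $C^0$ bound.} Since $f$ has only poles of order $|\tau_r|^{-2h_r}$, the assumption $\int f\Omega^n<\infty$ forces $f\in L^{1+\eta}$ for some $\eta>0$. In particular $f\in L\log^{n+\delta}L$, and the same holds for $f_\epsilon$ uniformly in $\epsilon$. Kołodziej's capacity method then applies. The estimate $\mathrm{Cap}(\{\varphi_\epsilon<-s\})$ versus $\int_{\{\varphi_\epsilon<-s\}}f_\epsilon\Omega^n$, combined with the Orlicz--Hölder inequality for $L\log^{n+\delta}L$ and a De Giorgi type iteration, yields
$$\|\varphi_\epsilon\|_{L^\infty}\leq C(\delta,\omega,\Omega^n)\,I_{\omega,\delta}(f_\epsilon)^{n/\delta}+1.$$
Since $I_{\omega,\delta}(f_\epsilon)\to I_{\omega,\delta}(f)$, this gives both the uniform bound and the stated estimate. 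For $\lambda>0$ the factor $e^{\lambda\varphi}$ only helps, after comparing with the $\lambda=0$ solution.

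\textbf{Second-order estimates.} This is the main obstacle. Yau's Laplacian estimate needs a lower bound on $\ddbar\log f_\epsilon$. The numerator factors give $\ddbar\log(|\sigma_j|^2+\epsilon^2)\geq-C\omega$ uniformly, because the curvature of $E_j$ is bounded. The pole factors give $\ddbar\log(|\tau_r|^2+\epsilon^2)^{-h_r}$, which is not uniformly bounded below near $S_1$. I would use the Tsuji/Păun trick. Apply the maximum principle to
$$\log\mathrm{tr}_\omega(\omega+\ddbar\varphi_\epsilon)-A\varphi_\epsilon+\sum_r h_r'\log(|\tau_r|^2+\epsilon^2)$$
with suitable constants, so that the barrier absorbs the bad curvature terms. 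This yields
$$\mathrm{tr}_\omega(\omega+\ddbar\varphi_\epsilon)\leq C\prod_r(|\tau_r|^2+\epsilon^2)^{-N}$$
uniformly in $\epsilon$, using the uniform $C^0$ bound. That is a local $C^{1,1}$ bound on compact subsets of $X\setminus S_1$. On compacts of $X\setminus S_2$ the right-hand side is uniformly bounded below and smooth, so the equation is uniformly elliptic there. Evans--Krylov and Schauder bootstrapping then give uniform $C^{k}$ bounds for every $k$.

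\textbf{Limit and uniqueness.} Extract $\varphi_\epsilon\to\varphi$ in $L^1$, and in $C^\infty_{\loc}(X\setminus S_2)$ and $C^{1,\alpha}_{\loc}(X\setminus S_1)$. Continuity of Monge--Ampère under monotone or uniform convergence of bounded potentials (Bedford--Taylor) shows that $\varphi$ solves the equation on $X$. Global continuity comes from Kołodziej's stability estimate $\|\varphi_\epsilon-\varphi_{\epsilon'}\|_\infty\lesssim\|f_\epsilon-f_{\epsilon'}\|_{L^{1}}^{\gamma}$, which makes the sequence uniformly Cauchy; hence $\varphi\in C^0(X)$. For uniqueness, when $\lambda>0$ use the comparison principle for bounded $\omega$-psh functions. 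When $\lambda=0$ use the uniqueness theorem for bounded solutions (Kołodziej/Dinew, or Błocki's argument), which holds up to a constant, and the normalization $\sup\varphi=0$ fixes that constant.
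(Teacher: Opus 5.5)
The paper gives no proof of this statement; it imports it from \cite{DP10}. Your outline is the standard strategy for such results: smooth approximation, Ko\l{}odziej's capacity method in the Orlicz class for the $L^\infty$ bound, a Yau-type Laplacian estimate with the barrier $\sum_r h_r\log(|\tau_r|^2+\epsilon^2)$, Evans--Krylov off $S_2$, then stability and passage to the limit. Several pieces are fine:
\begin{itemize}
\item $f\in L^{1+\eta}$ does follow from $f\in L^1$, via a log resolution.
\item The barrier argument works with $h_r'=h_r$.
\item Comparing with the $\lambda=0$ solution $\psi$ gives $\psi\leq\varphi\leq\psi+\|\psi\|_{L^\infty}$ for $\lambda>0$.
\end{itemize}

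\textbf{The gap.} The regularization you chose breaks the argument:
\[
f_\epsilon=c_\epsilon\prod_j(|\sigma_j|^2+\epsilon^2)^{l_j}\prod_r(|\tau_r|^2+\epsilon^2)^{-h_r},
\]
with the \emph{same} rate in numerator and denominator. You assert that $c_\epsilon\to1$ and that $f_\epsilon$ is bounded in $L\log^{n+\delta}L$ uniformly in $\epsilon$. Both fail as soon as zeros of the $\sigma_j$ are needed to compensate poles of the $\tau_r$, which the hypotheses allow. Where $|\sigma_j|<\epsilon$, the numerator has size $\epsilon^{2l_j}$ rather than $|\sigma_j|^{2l_j}$, and this no longer cancels the poles.

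Concretely, let $A$ be a smooth divisor with canonical section $s_A$, and take $\sigma_1=s_A^{\otimes2}$, $\tau_1=s_A$, $l_1=1$, $h_1=5/2$. Then $f=|s_A|^{-1}$ is integrable. On the tube $\{|s_A|<\epsilon\}$, of volume $\asymp\epsilon^2$, one has $f_\epsilon/c_\epsilon\geq\epsilon^2(2\epsilon^2)^{-5/2}\gtrsim\epsilon^{-3}$, so $\int_X(f_\epsilon/c_\epsilon)\Omega^n\gtrsim\epsilon^{-1}$. Consequences:
\begin{itemize}
\item $c_\epsilon\to0$ and $I_{\omega,\delta}(f_\epsilon)$ blows up.
\item $f_\epsilon/c_\epsilon$ is uniformly bounded on compact subsets of $X\setminus A$, so the measures $f_\epsilon\Omega^n$, of fixed mass $\int_X\omega^n$, concentrate on $A$ instead of converging to $f\Omega^n$.
\end{itemize}
So there is no uniform $C^0$ bound, and any limit would solve a different equation.

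\textbf{The repair.} Regularize the zeros much faster than the poles, e.g.
\[
f_\epsilon=c_\epsilon\prod_j(|\sigma_j|^2+\epsilon^{2K})^{l_j}\prod_r(|\tau_r|^2+\epsilon^2)^{-h_r},\qquad K\min_{l_j>0}l_j\geq\sum_r h_r .
\]
For $\epsilon\leq1$, expand
\[
\prod_j(|\sigma_j|^2+\epsilon^{2K})^{l_j}\leq 2^{\sum_j l_j}\sum_J\prod_{j\notin J}|\sigma_j|^{2l_j}\,\epsilon^{2K\sum_{j\in J}l_j},
\]
with $J$ ranging over subsets of $\{j:l_j>0\}$, and use $(|\tau_r|^2+\epsilon^2)^{-h_r}\leq\epsilon^{-2h_r}$. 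Every term with $J\neq\emptyset$ is then bounded, and the term $J=\emptyset$ is at most $f$. Hence $f_\epsilon\leq C(1+f)$ uniformly. Dominated convergence then gives $c_\epsilon\to1$, $I_{\omega,\delta}(f_\epsilon)\to I_{\omega,\delta}(f)$, and the uniform $L^p$ bounds needed in Ko\l{}odziej's stability estimate. The lower bound $i\partial\bar\partial\log(|\sigma_j|^2+\epsilon^{2K})\geq-C\omega$ and the evaluation of your barrier function at its maximum point are unaffected, so the rest of your argument goes through.

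\textbf{Two minor slips.}
\begin{itemize}
\item For $\lambda>0$, the maximum principle at a maximum point bounds $\varphi_\epsilon$ above in terms of $\inf f_\epsilon$, which the zeros spoil, not $\sup f_\epsilon$.
\item Finiteness of $\int_X f\Omega^n$ is needed for every $\lambda$; only the specific normalization is tied to $\lambda=0$.
\end{itemize}
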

We refer to \cite[Section~2]{DP10} for some basic definitions and facts about Orlicz spaces.

\section{Proof of the main results} \label{Section:Bogomolov--Sommese Vanishing}

We proceed to prove Theorem~\ref{Main Theorem 1}. Our argument combines a refinement of the method in \cite{Bou02,Wu20} with the solution of the degenerate Monge–Ampère equation established in \cite[Theorem~6.1, 6.2]{DP10}.

\begin{proof}[Proof of Theorem~\ref{Main Theorem 1}]

    We prove the desired conclusion by contradiction.
    Assume that for some $p<k=\nd(L)$ there is a nonzero section
    $$u\in H^0(X,\Omega^p_X(\log D)\otimes L^{-1}).$$
    
    \medskip
    \noindent\textbf{Step 1: basic settings and constructions.}
    
    For each $\varepsilon$ sufficiently small, we use the construction in Lemma~\ref{Lemma:Construct suitable metrics}:
    $$\pi_\varepsilon:\widetilde{X}_\varepsilon\longrightarrow X, \quad B_\varepsilon=\sum_{j=1}^{N}B_{\varepsilon,j},
    \quad Y_\varepsilon=\widetilde{X}_\varepsilon\setminus B_\varepsilon \quad \text{and } \gamma_\varepsilon, \Omega_\varepsilon.$$
    The pullback of $u$ is a nonzero section
    $$v:=\pi^*_\varepsilon u\in H^0(\widetilde{X}_\varepsilon,\Omega_{\widetilde{X}_\varepsilon}^p(\log B_\varepsilon)\otimes\pi^*_\varepsilon L^{-1}).$$
    
    Choose smooth Hermitian metrics $g_j$ on $\mathcal{O}_{\widetilde{X}_\varepsilon}(B_{\varepsilon,j})$ such that
    $|s_j|^2_{g_j}<e^{-1}$
    where $s_j$ are canonical sections of $\mathcal{O}_{\widetilde{X}_\varepsilon}(B_{\varepsilon,j})$.
    Choose a constant $C_\varepsilon>0$ such that
    $$-C_\varepsilon\Omega_\varepsilon \leq \sum_{j=1}^N\sqrt{-1}\Theta_{\mathcal{O}_{\widetilde{X}_\varepsilon}(B_{\varepsilon,j}),g_j}\leq C_\varepsilon\Omega_\varepsilon.$$
    Choose a sufficiently small constant $0<\tau_\varepsilon<1/2$ such that 
    $$\ddbar\log w_\varepsilon\geq-\varepsilon\Omega_\varepsilon, \quad \text{where } w_\varepsilon:=\prod_{j=1}^N|s_j|_{g_j}^{2\tau_\varepsilon}.$$
    Let $s_B:=s_1\otimes\cdots\otimes s_N$ and $g_B:=g_1\otimes\cdots\otimes g_N$.
    The natural inclusion $\Omega_{\widetilde{X}_\varepsilon}^p(\log B_\varepsilon)\hookrightarrow\Omega_{\widetilde{X}_\varepsilon}^p\otimes\mathcal{O}_{\widetilde{X}_\varepsilon}(B_\varepsilon)$,
    induces a holomorphic section
    $$\widetilde{v}\in H^0(\widetilde{X}_\varepsilon,\Omega_{\widetilde{X}_\varepsilon}^p\otimes\mathcal{O}_{\widetilde{X}_\varepsilon}(B_\varepsilon)\otimes\pi^*_\varepsilon L^{-1}).$$
    Note that $\mathcal{O}_{\widetilde{X}_\varepsilon}(B_\varepsilon)$ is trivial on $Y_\varepsilon$.
    Therefore, $\Omega_{\widetilde{X}_\varepsilon}^p\otimes\mathcal{O}_{\widetilde{X}_\varepsilon}(B_\varepsilon)|_{Y_\varepsilon}
    \cong\Omega_{\widetilde{X}_\varepsilon}^p|_{Y_\varepsilon}$ via $\widetilde{v}=v\otimes s_B$, and
    one has the norm identity
    $$|\widetilde{v}_{\Omega_\varepsilon,g_B,h_{\sm}^{-1}}|=|v|_{\Omega_\varepsilon,h_{\sm}^{-1}}|s_B|_{g_B}$$
    for every smooth metric $h_\sm$ on $\pi^*_\varepsilon L$.
    
    For $0<\eta\leqslant 1$, consider the holomorphic section
    $$\sigma_\eta:=(\widetilde{v},\eta s_B)\in H^0\left(\widetilde{X}_\varepsilon,(\Omega_{\widetilde{X}_\varepsilon}^p\otimes\mathcal{O}_{\widetilde{X}_\varepsilon}(B_\varepsilon)\otimes\pi^*_\varepsilon L^{-1})\oplus\mathcal{O}_{\widetilde{X}_\varepsilon}(B_\varepsilon)\right).$$
    It is easy to see $\sigma_\eta$ is nowhere vanishing on $Y_\varepsilon$.
    Write the metric with divisorial singularities supported on $B_\varepsilon$ constructed in Lemma~\ref{Lemma:Construct suitable metrics} as 
    $$\widetilde{h}_\varepsilon=h_\sm e^{-\rho_\varepsilon}.$$
    There is a smooth function $f_\varepsilon$ on $\widetilde{X}_\varepsilon$ such that
    $$\rho_\varepsilon=f_\varepsilon+\sum_{j=1}^Nc_j \log |s_j|^2_{g_j}, \quad c_j\geq 0.$$       
    Define on $Y_\varepsilon$
    \begin{equation}
        \begin{aligned}
            F_\eta&:=e^{\rho_\varepsilon}w_\varepsilon(|v|_{\Omega_\varepsilon,h_\sm^{-1}}^2+\eta^2) \\
            &=e^{f_\varepsilon}|\sigma_\eta|^2_{h_\oplus}\frac{\prod_j|s_j|_{g_j}^{2c_j}}{\prod_j|s_j|_{g_j}^{2(1-\tau_\varepsilon)}},
        \end{aligned}
    \end{equation}
    where $h_\oplus$ is the metric on the direct sum bundle induced by $\Omega_\varepsilon,g_B$, and $h_\sm^{-1}$.
    Then for any $\varepsilon>0$, there is $q_\varepsilon>1$ such that
    \begin{equation}
        \sup_{0<\eta\leq1}\|F_\eta\|_{L^{q_\varepsilon}(\widetilde{X}_\varepsilon,\Omega_\varepsilon^n)}<+\infty.
    \end{equation}
    Indeed, it is enough to choose $q_\varepsilon\in(1,(1-\tau_\varepsilon)^{-1})$.
    
    \medskip
    \noindent\textbf{Step 2: solutions of certain Monge--Amp\`ere equations.}
    
    H\"older inequality implies that $\int_{\widetilde{X}_\varepsilon}F_\eta\Omega_\varepsilon^n$ are uniformly bounded above for $0<\eta\leq1$.
    By $F_\eta>F_0\not\equiv0$, they are also bounded below by a positive constant.
    Define 
    $$c_\eta:=\frac{\int_{\widetilde{X}_\varepsilon}\gamma_\varepsilon^n}{\int_{\widetilde{X}_\varepsilon}F_\eta\Omega_\varepsilon^n} 
    \quad \text{and} \quad \widetilde{F}_\eta:=c_\eta F_\eta.$$
    Then $$\sup_{0<\eta\leq1}\|\widetilde{F}_\eta\|_{L^{q_\varepsilon}(\widetilde{X}_\varepsilon,\Omega_\varepsilon^n)}<+\infty.$$
    In particular, for some $\delta_\varepsilon>0$, the functions $\widetilde{F}_\eta$ have uniformly bounded $L\log^{n+\delta_\varepsilon}L$-norms.
    
    By applying Demailly--Pali's result (Theorem~\ref{Theorem:Demailly--Pali}) to the normalized density $\widetilde{F}_\eta$ with $\lambda=1$, 
    there is a unique bounded $\gamma_\varepsilon$-psh solution $\psi_\eta$ of 
    \begin{equation*}
        \omega_{\varepsilon,\eta}^n=e^{\psi_\eta}\widetilde{F}_\eta\Omega_\varepsilon^n, \quad \text{where } \omega_{\varepsilon,\eta}=\gamma_\varepsilon+\ddbar\psi_\eta.
    \end{equation*}
    Moreover, $\psi_\eta\in L^\infty(\widetilde{X}_\varepsilon)\cap C^\infty(Y_\varepsilon)$, $\omega_{\varepsilon,\eta}$ is a K\"ahler form on $Y_\varepsilon$, and for any fixed $\varepsilon>0$,
    $$\sup_{0<\eta\leq1}\|\psi_\eta\|_{L^\infty(\widetilde{X}_\varepsilon)}<+\infty.$$
    Set $\varphi_\eta:=\psi_\eta+\log c_\eta$.
    Then we have 
    $$\omega_{\varepsilon,\eta}^n=e^{\varphi_\eta}F_\eta\Omega_\varepsilon^n 
    \quad \text{and} \quad
    \sup_{0<\eta\leq1}\|\varphi_\eta\|_{L^\infty(\widetilde{X}_\varepsilon)}<+\infty.$$
    Here $\varphi_{\eta}$ is locally bounded on $\widetilde{X}_\varepsilon$, thus the wedge product is well-defined in the sense of Bedford--Taylor.
    
    Define 
    $$G_{\varepsilon,\eta}:=e^{\rho_\varepsilon+\varphi_\eta}w_\varepsilon|v|^2_{\Omega_\varepsilon,h_\sm^{-1}}
    \quad \text{and} \quad
    R_{\varepsilon,\eta}:=\eta^2e^{\rho_\varepsilon+\varphi_\eta}w_\varepsilon.$$
    Then 
    $$\omega_{\varepsilon,\eta}^n=(G_{\varepsilon,\eta}+R_{\varepsilon,\eta})\Omega_\varepsilon^n
    \quad \text{and} \quad
    \int_{\widetilde{X}_\varepsilon}(G_{\varepsilon,\eta}+R_{\varepsilon,\eta})\Omega_\varepsilon^n=\int_{\widetilde{X}_\varepsilon}\gamma_\varepsilon^n.$$
    Note that $e^{\rho_\varepsilon}\in L^1(\widetilde{X}_\varepsilon)$ and the $L^\infty$-norms of $\varphi_\eta$ are uniformly bounded. We have 
    $$\int_{\widetilde{X}_\varepsilon}R_{\varepsilon,\eta}\Omega_\varepsilon^n\rightarrow 0 \quad \text{as } \eta\rightarrow0.$$
    Therefore, we can choose $\eta=\eta(\varepsilon)>0$ small enough such that
    $$M_\varepsilon:=\int_{\widetilde{X}_\varepsilon}G_{\varepsilon,\eta(\varepsilon)}\Omega_\varepsilon^n\geq\frac{1}{2}\int_{\widetilde{X}_\varepsilon}\gamma_\varepsilon^n.$$
    From now on, $\eta(\varepsilon)$ is fixed for a given $\varepsilon$. 
    Thus we write $\varphi_\varepsilon,\omega_\varepsilon,G_\varepsilon$ and $R_\varepsilon$.
    
    \medskip
    \noindent\textbf{Step 3: estimates via Bochner--Kodaira--Nakano identity.}
    
    Define a metric on $\pi^*_\varepsilon L$ by 
    $$\widehat{h}_\varepsilon:=\widetilde{h}_\varepsilon e^{-\varphi_\varepsilon}.$$
    Define a metric on $\pi^*_\varepsilon L^{-1}$ by
    $$H_\varepsilon:=\widehat{h}_\varepsilon^{-1}w_\varepsilon=h_\sm^{-1} e^{\rho_\varepsilon+\varphi_\varepsilon}w_\varepsilon.$$
    Then by the construction, 
    $|v|^2_{\Omega_\varepsilon,H_\varepsilon}=G_\varepsilon$.
    On $Y_\varepsilon$, we have 
    \begin{equation}
        \begin{aligned}
            \sqrt{-1}\Theta_{\pi^*_\varepsilon L,H_\varepsilon^{-1}}&=\omega_\varepsilon-3\varepsilon\pi_\varepsilon^*\omega+\ddbar\log w_\varepsilon \\
            &\geq \omega_\varepsilon-4\varepsilon\Omega_\varepsilon.
        \end{aligned}
    \end{equation}
    Let $\Omega_P$ be a complete Poincar\'e-type K\"ahler form on $Y_\varepsilon$ and consider $\Omega_{\varepsilon,\delta}:=\Omega_\varepsilon+\delta\Omega_P, 0<\delta\leq1$.
    We now show that $v$ is $L^2$-integrable on $Y_\varepsilon$ with respect to $\Omega_{\varepsilon,\delta}$ and $H_\varepsilon$ for every $0<\delta<1$. 
    
    Let $(W;z_1,\ldots,z_n)$ be a local coordinate chart of $\widetilde{X}_\varepsilon$ along $B_\varepsilon$ such that $W\cap B_\varepsilon=\{z_1\cdots z_t=0\}$. 
    Let $e$ be a trivialization section of $\pi^*_\varepsilon L^{-1}$ on $W$ such that $\frac{1}{2}<|e|_{h_\sm^{-1}}<1$ over $W$. 
    Denote $$\xi_j=\frac{1}{z_j}dz_j \quad \text{for } 1\leq j\leq t, \quad \xi_j=dz_j \quad \text{for } t+1\leq j\leq n.$$ 
    Write $v=\sum_{|I|=p}v_I\xi_{i_1}\wedge\cdots\wedge\xi_{i_p}\otimes e$ where $I=(i_1,\ldots,i_p)$ is a multi-index with $i_1 < \cdots< i_p$ and $v_I(z)$ is a holomorphic function on $W$.
    Recall that 
    $$\Omega_P \sim \sum_{j=1}^{t} \frac{\sqrt{-1}\, dz_j \wedge d\bar z_j}{|z_j|^2 (\log |z_j|^2)^2} + \sum_{j=t+1}^{n} \sqrt{-1}\, dz_j \wedge d\bar z_j,$$
    on $W$. Then 
    $\Omega_{\varepsilon,\delta} = \Omega_\varepsilon + \delta \Omega_P$ satisfies 
    $$\Omega_{\varepsilon,\delta} \sim \sum_{j=1}^{t} \frac{\sqrt{-1}\, dz_j \wedge d\bar z_j}{|z_j|^2 (\log |z_j|^2)^2} + \sum_{j=t+1}^{n} \sqrt{-1}\, dz_j \wedge d\bar z_j ,$$
    and hence 
    $$(\Omega_\varepsilon+\delta\Omega_P)^n \sim \prod_{j=1}^{t} \frac{1}{|z_j|^2 (\log |z_j|^2)^2} \, dV_{\mathbb C^n} .$$
    Note that 
    $$|\xi_I|^2_{\Omega_{\varepsilon,\delta}} \sim \prod_{j \in I, j\le t} (\log |z_j|^2)^2.$$
    Therefore,
    $$|\xi_I|^2_{\Omega_{\varepsilon,\delta}}  (\Omega_\varepsilon+\delta\Omega_P)^n\leqslant \widetilde{C} \prod_{j=1}^t |z_j|^{-2} \, dV_{\mathbb C^n} .$$
    Thus,
    
    \begin{align*}
         \int_{W}|v|^2_{\Omega_{\varepsilon,\delta},H_\varepsilon}dV_{\Omega_{\varepsilon,\delta}}&\leq C\int_{W}\sum_{I}|v_I|^2|\xi_{i_1}\wedge\cdots\xi_{i_p}|_{\Omega_{\varepsilon,\delta}}^2e^{\rho_\varepsilon+\varphi_\varepsilon}w_\varepsilon(\Omega_\varepsilon+\delta\Omega_P)^n   \\
        &\leq C'\int_W\prod_{j=1}^t|z_j|^{2c_j+2\tau_\varepsilon}(\prod_{j=1}^t|z_j|^2)^{-1}dV_{\CC^n} \\
        &\leq C''\prod_{j=1}^t\int_0^{r'}r_j^{2c_j+2\tau_\varepsilon-1}dr<+\infty.
    \end{align*}
    Therefore, $v$ is $L^2$-integrable.
Since $v$ is a $(p,0)$-form and $\Omega_{\varepsilon,\delta}\leq\Omega_{\varepsilon,1}$, we have $\|v\|^2_{\Omega_{\varepsilon,\delta},H_\varepsilon}\leq\|v\|^2_{\Omega_{\varepsilon,1},H_\varepsilon}$.
    
    Let $0<\lambda_{\varepsilon,\delta,1}\leq\cdots\leq\lambda_{\varepsilon,\delta,n}$ be eigenvalues of $\omega_\varepsilon$ with respect to $\Omega_{\varepsilon,\delta}$.
    We now show that 
    \begin{equation}\label{Formula:integral of eigenvalues}
        \begin{aligned}
            I_\varepsilon&:=\int_{Y_\varepsilon}(\lambda_{\varepsilon,0,1}+\cdots+\lambda_{\varepsilon,0,n-p})G_\varepsilon\Omega_\varepsilon^n \\
            &\leq 4(n-p)\varepsilon\int_{Y_\varepsilon}G_\varepsilon\Omega_\varepsilon^n   \\
            &\leq 4(n-p)\varepsilon\int_{\widetilde{X}_\varepsilon}\gamma_\varepsilon^n.
        \end{aligned}
    \end{equation}
    
    Fix $\delta>0$. Since $(Y_\varepsilon,\Omega_{\varepsilon,\delta})$ is complete, there are smooth compactly supported functions $\chi_\nu$ such that
    $$0\leq\chi_\nu\leq1,\quad
    \chi_\nu\rightarrow1,\quad \text{and} \quad
    |{d\chi_\nu}|_{\Omega_{\varepsilon,\delta}}\leq\nu^{-1}.$$
    Since $\chi_\nu v$ is a $(p,0)$-form, $\bar\partial^*(\chi_\nu v)=0$.
    We also have $\bar\partial(\chi_\nu v)=\bar\partial\chi_\nu\wedge v.$
    Note that 
    $$\sqrt{-1}\Theta_{\pi^*_\varepsilon L,H_\varepsilon^{-1}}\geq \omega_\varepsilon-4\varepsilon\Omega_\varepsilon\geq \omega_\varepsilon-4\varepsilon\Omega_{\varepsilon,\delta}.$$
    Therefore, acting on $(p,0)$-forms, we have
    $$\langle[\sqrt{-1}\Theta_{\pi^*_\varepsilon L^{-1},H_\varepsilon},\Lambda_{\Omega_{\varepsilon,\delta}}]v,v\rangle\geq(\lambda_{\varepsilon,\delta,1}+\cdots+\lambda_{\varepsilon,\delta,n-p}-4(n-p)\varepsilon)|v|^2_{\Omega_{\varepsilon,\delta},H_\varepsilon}.$$
    Then the Bochner--Kodaira--Nakano identity implies that
    \begin{equation}
        \begin{aligned}
            \int_{Y_\varepsilon}\chi_\nu^2(\lambda_{\varepsilon,\delta,1}+\cdots+\lambda_{\varepsilon,\delta,n-p}&-4(n-p)\varepsilon)|v|^2_{\Omega_{\varepsilon,\delta},H_\varepsilon}\Omega_{\varepsilon,\delta}^n \\
            &\leq\|\bar\partial\chi_\nu\wedge v\|^2_{\Omega_{\varepsilon,\delta},H_\varepsilon}\leq C\nu^{-2}\|v\|^2_{\Omega_{\varepsilon,\delta},H_\varepsilon},
        \end{aligned}
    \end{equation}
    where $C>0$ is a constant independent of $\nu$ and $v$.
    Hence 
    \begin{equation}
        \begin{aligned}
            \int_{Y_\varepsilon}\chi_\nu^2&(\lambda_{\varepsilon,\delta,1}+\cdots+\lambda_{\varepsilon,\delta,n-p})|v|^2_{\Omega_{\varepsilon,\delta},H_\varepsilon}\Omega_{\varepsilon,\delta}^n \\
            &\leq 4(n-p)\varepsilon \int_{Y_\varepsilon}\chi_\nu^2|v|^2_{\Omega_{\varepsilon,\delta},H_\varepsilon}\Omega_{\varepsilon,\delta}^n        +C\nu^{-2}\|v\|^2_{\Omega_{\varepsilon,\delta},H_\varepsilon},
        \end{aligned}
    \end{equation}
    Then Fatou's Lemma and dominated convergence theorem imply that, as $\nu\rightarrow\infty$,
    $$\int_{Y_\varepsilon}(\lambda_{\varepsilon,\delta,1}+\cdots+\lambda_{\varepsilon,\delta,n-p})|v|^2_{\Omega_{\varepsilon,\delta},H_\varepsilon}\Omega_{\varepsilon,\delta}^n
    \leq4(n-p)\varepsilon \int_{Y_\varepsilon}|v|^2_{\Omega_{\varepsilon,\delta},H_\varepsilon}\Omega_{\varepsilon,\delta}^n.$$
    Again, by Fatou's Lemma and dominated convergence theorem, as $\delta\rightarrow0$, we have 
    \begin{align*}
        \int_{Y_\varepsilon}(\lambda_{\varepsilon,0,1}+\cdots+\lambda_{\varepsilon,0,n-p})|v|^2_{\Omega_\varepsilon,H_\varepsilon}\Omega_\varepsilon^n
        &\leq4(n-p)\varepsilon \int_{Y_\varepsilon}|v|^2_{\Omega_\varepsilon,H_\varepsilon}\Omega_\varepsilon^n \\
        &=4(n-p)\varepsilon \int_{Y_\varepsilon}G_\varepsilon\Omega_\varepsilon^n \\
        &\leq 4(n-p)\varepsilon\int_{\widetilde{X}_\varepsilon}\gamma_\varepsilon^n.
    \end{align*}
    This proves formula~(\ref{Formula:integral of eigenvalues}).
    
    \medskip
    \noindent\textbf{Step 4: the determinant lower bound and the contradiction.}
    
    We now consider $\lambda_{\varepsilon,0,j}$.
    One has 
    $$\lambda_{\varepsilon,0,1}\cdots\lambda_{\varepsilon,0,n}=G_\varepsilon+R_\varepsilon.$$
    Denote 
    $$a_\varepsilon:=\frac{G_\varepsilon}{G_\varepsilon+R_\varepsilon}\leq 1 
    \quad \text{and} \quad
    S_\varepsilon:=\lambda_{\varepsilon,0,n-p+1}\cdots\lambda_{\varepsilon,0,n}.$$
    Then 
    $$G_\varepsilon+R_\varepsilon=\lambda_{\varepsilon,0,1}\cdots\lambda_{\varepsilon,0,n-p}S_\varepsilon\leq\lambda_{\varepsilon,0,n-p}^{n-p}S_\varepsilon.$$
    Hence, 
    $$\lambda_{\varepsilon,0,n-p}G_\varepsilon\geq a_\varepsilon(G_\varepsilon+R_\varepsilon)^{1+\frac{1}{n-p}}S_{\varepsilon}^{-\frac{1}{n-p}}.$$
    Write 
    $$a_\varepsilon(G_\varepsilon+R_\varepsilon)=\left(a_\varepsilon(G_\varepsilon+R_\varepsilon)^{1+\frac{1}{n-p}}S_\varepsilon^{-\frac{1}{n-p}}      \right)^{\frac{n-p}{n-p+1}}  
    \left(a_\varepsilon S_\varepsilon\right)^{\frac{1}{n-p+1}}.$$
    Then H\"older's inequality implies that
    \begin{equation}
        \begin{aligned}
            M_\varepsilon&=\int_{Y_\varepsilon}G_{\varepsilon}\Omega_\varepsilon^n             
            =\int_{Y_\varepsilon}a_\varepsilon(G_{\varepsilon}+R_\varepsilon)\Omega_\varepsilon^n  \\
            &\leq\left(\int_{Y_\varepsilon}a_\varepsilon(G_\varepsilon+R_\varepsilon)^{1+\frac{1}{n-p}}S_\varepsilon^{-\frac{1}{n-p}} \Omega_\varepsilon^n \right)^{\frac{n-p}{n-p+1}}
            \left(\int_{Y_\varepsilon}a_\varepsilon S_\varepsilon\Omega_\varepsilon^n \right)^{\frac{1}{n-p+1}}.
        \end{aligned}
    \end{equation}
    Therefore, 
    \begin{equation}
        \begin{aligned}
            I_\varepsilon&=\int_{Y_\varepsilon}(\lambda_{\varepsilon,0,1}+\cdots+\lambda_{\varepsilon,0,n-p})G_\varepsilon\Omega_\varepsilon^n \\
            &\geq \int_{Y_\varepsilon}\lambda_{\varepsilon,0,n-p}G_\varepsilon\Omega_\varepsilon^n \\
            &\geq\int_{Y_\varepsilon}a_\varepsilon(G_\varepsilon+R_\varepsilon)^{1+\frac{1}{n-p}}S_\varepsilon^{-\frac{1}{n-p}} \Omega_\varepsilon^n\\
            &\geq \frac{M_\varepsilon^{1+\frac{1}{n-p}}}{\left(\int_{Y_\varepsilon} S_\varepsilon\Omega_\varepsilon^n \right)^{\frac{1}{n-p}}}.
        \end{aligned}
    \end{equation}
    Since the product of the largest $p$ eigenvalues is bounded by the $p$-th elementary symmetric polynomial, there exists a constant $C_{n,p}$ such that
    \begin{align*}
        \int_{Y_\varepsilon} S_\varepsilon\Omega_\varepsilon^n&\leq 
        C_{n,p}\int_{Y_\varepsilon}\omega_\varepsilon^p\wedge\Omega_\varepsilon^{n-p} \\
        &=C_{n,p}\int_{Y_\varepsilon}(\gamma_\varepsilon+\ddbar\varphi_{\varepsilon})^p\wedge\Omega_\varepsilon^{n-p}          \\
        &=C_{n,p}\int_{\widetilde{X}_\varepsilon}\gamma_\varepsilon^p\wedge\Omega_\varepsilon^{n-p}\leq C_{n,p}C_0.
    \end{align*}
    Since $M_\varepsilon\geq\frac{1}{2}\int_{\widetilde{X}_\varepsilon}\gamma_\varepsilon^n$, we obtain
    $$I_\varepsilon\geq c_3\left(\int_{\widetilde{X}_\varepsilon}\gamma_\varepsilon^n\right)^{1+\frac{1}{n-p}}.$$
    On the other hand, we showed in Step 3 that
    $$I_\varepsilon\leq 4(n-p)\varepsilon\int_{\widetilde{X}_\varepsilon}\gamma_\varepsilon^n.$$
    Thus, 
    $$(\int_{\widetilde{X}_\varepsilon}\gamma_\varepsilon^n)^{\frac{1}{n-p}}\leq C_3\varepsilon$$
    for some constant $C_3>0$.
    Note that by the construction in Lemma~\ref{Lemma:Construct suitable metrics},
    $$\int_{\widetilde{X}_\varepsilon}\gamma_\varepsilon^n\geq c_0\varepsilon^{n-k}$$
    for any $k=\nd(L)$.
    Therefore,
    $$c_0\varepsilon^{n-k}\leq (C_3\varepsilon)^{n-p}.$$
    This is impossible when $\varepsilon$ tends to $0$ since $p<k$. 
    Thus this gives a contradiction. 
    Hence $H^0(X,\Omega^p_X(\log D)\otimes L^{-1})=0$.
    
    This completes the proof of Theorem~\ref{Main Theorem 1}.
    
\end{proof}

\begin{remark}
    We provide a simple proof via hyperplane induction on projective manifolds.
    Assume that $X$ is a projective manifold. 
    It is easy to see that the conclusion holds when $\dim X=1$.
    Assume that the conclusion holds for all projective manifolds with dimension at most $n-1$. 

    If $\nd(L)=n$, then $L$ is a big line bundle and $\kappa(L)=n$. 
    Then by Bogomolov--Sommese vanishing theorem (Theorem~\ref{Theorem:BS vanishing}), 
    $$H^0(X,\Omega^p_X(\log D)\otimes L^{-1})=0 \quad \text{for every } p<n.$$
    
    We now assume $\nd(L)<n$.
    Consider $s\in H^0(X,\Omega^p_X(\log D)\otimes L^{-1})$ with $p<\nd(L)$.
    Let $H$ be an ample divisor and take a general smooth $A\in|mH|$ for $m\gg0$ such that 
    $A+D$ is a simple normal crossing divisor and $\nd(L|_A)\geq\nd(L)$.
    Consider the exact sequence
    $$0\to \Omega_X^p(\log(D+A))\otimes\mathcal{O}_X(-A)\otimes L^{-1} \to \Omega_X^p(\log D)\otimes L^{-1} \to \Omega_A^p(\log D|_A)\otimes L^{-1}|_A\to 0.$$
    Then $s|_A\in H^0(A,\Omega^p_A(\log D|_A)\otimes L^{-1}|_A)=0$ by the induction hypothesis.
    Note that this is true for general $A$. 
    Hence $s=0$. 
    Therefore
    $$H^0(X,\Omega^p_X(\log D)\otimes L^{-1})=0 \quad \text{for every } p<\nd(L).$$
\end{remark}


    
\begin{proof}[Proof of Corollary~\ref{Corollary:Corollary with multiplier}]
    Note that $\mathcal{I}(\psi)$ is a torsion-free ideal subsheaf in $\mathcal{O}_X$.
    Then $\mathcal{I}(\psi)^{\vee\vee}$ is a rank-one reflexive sheaf. 
    Thus $M:=\mathcal{I}(\psi)^{\vee\vee}$ is a holomorphic line bundle.
    We have an inclusion $\mathcal{I}(\psi)^{\vee\vee}\hookrightarrow\mathcal{O}_X$ induced by the inclusion $\mathcal{I}(\psi)\hookrightarrow\mathcal{O}_X$.
    This inclusion gives a nonzero section $s\in H^0(X,M^{-1})$.
    Let $B=\sum_{i}a_iB_i$ be the zero divisor of section $s$. 
    We have $M^{-1}\simeq\mathcal{O}_X(B)$ and $M\simeq\mathcal{O}_X(-B)$.
    
    On the other hand, we have the following expression of $\mathcal{I}(\psi)$ by Proposition~\ref{Proposition:Demailly--Peternell} (\cite[Proposition~3.2]{DP03}). 
    Consider the Siu decomposition of the closed positive $(1,1)$-current $T:=\theta+\frac{\ddbar\psi}{2\pi}$:
    $$T=\sum_{j=1}^{\infty}\lambda_j[E_j]+R,$$
    where $E_j$ are effective divisors and $R$ is the residue closed positive current such that the Lelong sublevel sets $E_c(R)$, $c>0$, all have codimension two.
    Then we have 
    $$\mathcal{I}(\psi)\subset\mathcal{O}_X(-\sum\lfloor \lambda_j \rfloor E_j),$$
    and the equality holds on $X\setminus Z$ where $Z$ is an analytic subset of $X$ whose components all have codimension at least two.
    
    Therefore, we have $M=\mathcal{O}_X(-\sum\lfloor \lambda_j \rfloor E_j)$. 
    Consider the pseudo-effective line bundle $N:=L\otimes M$. 
    Since $$c_1(N)-\{\alpha\}=c_1(L)-\{\alpha\}+c_1(M)=\{T-\sum\lfloor \lambda_j \rfloor E_j\}$$
    is pseudo-effective, 
    we have $\nd(c_1(N))\geq\nd(\alpha)$.
    By Serre duality, 
    \begin{align*}
        &H^n(X,\Omega^p_X(\log D)\otimes L\otimes\mathcal{O}_X(-D)\otimes\mathcal{I}(\psi))^*\\
        \cong &\Hom(\Omega^p_X(\log D)\otimes L\otimes\mathcal{O}_X(-D)\otimes\mathcal{I}(\psi),K_X)\\
        \cong& H^0(X,\Omega_X^{n-p}(\log D)\otimes L^{-1}\otimes\mathcal{I}(\psi)^\vee) \\
        \cong& H^0(X,\Omega^{n-p}_X(\log D)\otimes N^{-1}) 
        =0
    \end{align*}
    for $n-p<\nd(N)$, equivalently, $p\geq n-\nd(N)+1$.
    Therefore, 
    $$H^n(X,\Omega^p_X(\log D)\otimes L\otimes\mathcal{O}_X(-D)\otimes\mathcal{I}(\psi))=0 $$
    for $p\geq n-\nd(\{\alpha\})+1$.
\end{proof}

\section{Applications} \label{Section:Applications}

In this section, we give some geometric applications of Theorem~\ref{Main Theorem 1}. 
We first show the following useful reformulation.

\begin{proposition}\label{Proposition:Numerical line subbundle principle}
    Let $X$ be a compact K\"ahler manifold and $D$ a reduced simple normal crossing divisor on $X$.
    Let $L$ be a pseudo-effective line bundle on $X$.
    If there exists a nonzero morphism
    $$L\longrightarrow \Omega_X^p(\log D),$$
    then
    $$\nd(L)\leq p.$$
\end{proposition}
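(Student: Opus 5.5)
The plan is to turn the nonzero morphism into a section of $\Omega^p_X(\log D)\otimes L^{-1}$ and then apply Theorem~\ref{Main Theorem 1} contrapositively.

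First I would record the identification. A nonzero morphism of coherent sheaves $\phi\colon L\to\Omega^p_X(\log D)$ is the same as a nonzero element of
$$\Hom(L,\Omega^p_X(\log D))\cong H^0(X,\Omega^p_X(\log D)\otimes L^{-1}).$$
This uses only that $L$ is a line bundle, hence locally free, so that $\mathcal{H}om(L,\mathcal{F})\cong\mathcal{F}\otimes L^{-1}$ for any coherent sheaf $\mathcal{F}$. Here $\mathcal{F}=\Omega^p_X(\log D)$ is locally free, as recalled in Section~\ref{Section:Preliminary}. Since $\phi\neq0$, the corresponding global section $u$ is nonzero.

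Next I would argue by contradiction. Suppose $\nd(L)>p$, that is, $p<\nd(L)$. Since $L$ is pseudo-effective, $X$ is compact K\"ahler and $D$ is a reduced simple normal crossing divisor, Theorem~\ref{Main Theorem 1} gives $H^0(X,\Omega^p_X(\log D)\otimes L^{-1})=0$. This contradicts $u\neq0$. Hence $\nd(L)\leq p$.

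There is essentially no obstacle. The only point to check is that the hypotheses of Theorem~\ref{Main Theorem 1} match exactly, which they do.

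One edge case deserves a remark: a morphism $L\to\Omega^p_X(\log D)$ need not be injective as a map of bundles; it may be an injection of sheaves that vanishes along some locus. This causes no difficulty, because the argument uses only that the associated global section is nonzero, not any saturation of the image.
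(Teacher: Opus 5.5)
Your proposal is correct and matches the paper's own proof: the paper likewise identifies the nonzero morphism with a nonzero section of $\Omega^p_X(\log D)\otimes L^{-1}$. It then observes that $p<\nd(L)$ would contradict Theorem~\ref{Main Theorem 1}.
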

\begin{proof}
    A nonzero morphism $L\longrightarrow\Omega_X^p(\log D)$ is equivalent to a nonzero section
    $$0\neq s\in H^0(X,\Omega_X^p(\log D)\otimes L^{-1}).$$
    If $p<\nd(L)$, this contradicts Theorem~\ref{Main Theorem 1}.
\end{proof}

Then we have the following direct conclusion.

\begin{corollary}\label{Corollary:log-conormal-multivectors}
    Let $X$ be a compact K\"ahler manifold of dimension $n$ and $D$ a reduced simple normal crossing divisor on $X$.
    The following statements hold.
    \begin{enumerate}
        \item Let $\mathcal{F}\subset T_X(-\log D)$ be a saturated coherent subsheaf of corank $r$, where $1\leq r\leq n$, and set
        $$\mathcal{Q}:=T_X(-\log D)/\mathcal{F}, \quad
        A_{\mathcal{F},D}:=\det(\mathcal{Q}^*)=(\wedge^r\mathcal{Q}^*)^{**}.$$
        If $A_{\mathcal{F},D}$ is pseudo-effective, then
        $\nd(A_{\mathcal{F},D})\leq r$.
        In particular, in the case of $r=1$, suppose that $\mathcal{F}\subset T_X$ is a saturated codimension-one foliation, and that every irreducible component of $D$ is $\mathcal{F}$-invariant. 
        Denote $$N_{\mathcal{F}}:=(T_X/\mathcal{F})^{**}.$$ 
        Then $A_{\mathcal{F},D}\simeq N_{\mathcal{F}}^*\otimes\mathcal{O}_X(D)$.
        If $N_{\mathcal{F}}^*\otimes\mathcal{O}_X(D)$ is pseudo-effective, then
        $$\nd(N_{\mathcal{F}}^*\otimes\mathcal{O}_X(D))\leq 1.$$
        
        \item Assume that $K_X+D$ is pseudo-effective. Then
        $$H^0(X,\wedge^pT_X(-\log D))=0 \quad \text{for every } n-p<\nd(K_X+D).$$
        In particular, if $\nd(K_X+D)=n$, then
        $$H^0(X,T_X(-\log D))=0.$$
        Thus $(X,D)$ has no nonzero infinitesimal automorphism.
        If $\nd(K_X+D)\geq n-1$, then
        $$H^0(X,\wedge^2T_X(-\log D))=0.$$
        Hence there is no nonzero logarithmic Poisson structure on $(X,D)$.
    \end{enumerate}
\end{corollary}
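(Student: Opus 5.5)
Both parts reduce to Proposition~\ref{Proposition:Numerical line subbundle principle}: in each case we produce a nonzero morphism from a pseudo-effective line bundle into some $\Omega^q_X(\log D)$.

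\textbf{Part (1).} Since $\mathcal{F}$ is saturated in the locally free sheaf $T_X(-\log D)$, the quotient $\mathcal{Q}$ is torsion-free of rank $r$. Dualizing $T_X(-\log D)\to\mathcal{Q}\to 0$ gives an injection $\mathcal{Q}^*\hookrightarrow\Omega^1_X(\log D)$, using that $T_X(-\log D)^*=\Omega^1_X(\log D)$. Taking $\wedge^r$ gives a morphism $\wedge^r\mathcal{Q}^*\to\Omega^r_X(\log D)$. It is nonzero, hence injective, on the open set $U$ with complement of codimension $\geq2$ where $\mathcal{Q}$ is locally free and $\mathcal{Q}^*$ is a subbundle. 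Because $\Omega^r_X(\log D)$ is locally free, hence reflexive, the morphism extends to a nonzero morphism $A_{\mathcal{F},D}=(\wedge^r\mathcal{Q}^*)^{**}\to\Omega^r_X(\log D)$ of the reflexive rank-one sheaf. On a manifold such a sheaf is a line bundle. If $A_{\mathcal{F},D}$ is pseudo-effective, Proposition~\ref{Proposition:Numerical line subbundle principle} gives $\nd(A_{\mathcal{F},D})\leq r$.

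For $r=1$ we must identify $A_{\mathcal{F},D}$, and I expect this to be the main obstacle, since one has to interpret $\mathcal{F}\subset T_X(-\log D)$ correctly. Invariance of each component $D_j$ means $\mathcal{F}$ is tangent to $D$, so $\mathcal{F}\subset T_X(-\log D)$. The check is local and off codimension two, so it suffices to work at a general point of $D_j$, where $D_j=\{z_1=0\}$ and $\mathcal{F}$ is regular. Invariance lets us write $\mathcal{F}=\ker(dz_1)$ there. Then $T_X(-\log D)$ is generated by $z_1\partial_{z_1},\partial_{z_2},\dots$, and the quotient $T_X(-\log D)/\mathcal{F}$ is generated by the class of $z_1\partial_{z_1}$. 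Its dual $\mathcal{Q}^*\subset\Omega^1_X(\log D)$ is generated by $dz_1/z_1$. On the other hand, $N^*_{\mathcal{F}}$ is generated by $dz_1$. Hence $\mathcal{Q}^*=N^*_{\mathcal{F}}\otimes\mathcal{O}_X(D)$ near $D_j$, and they agree trivially off $D$. Away from $D$ and the singular locus of $\mathcal{F}$ the two sheaves coincide. Since both are reflexive of rank one and agree outside codimension two, $A_{\mathcal{F},D}\simeq N_{\mathcal{F}}^*\otimes\mathcal{O}_X(D)$. The bound $\nd\leq1$ then follows from the general case.

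\textbf{Part (2).} There is a perfect pairing $\wedge^pT_X(-\log D)\otimes\Omega^{n-p}_X(\log D)\to\Omega^n_X(\log D)=K_X\otimes\mathcal{O}_X(D)$. It gives the isomorphism $\wedge^pT_X(-\log D)\cong\Omega^{n-p}_X(\log D)\otimes(K_X+D)^{-1}$, which can be verified locally in the frames $\xi_j$ and their duals. So $H^0(X,\wedge^pT_X(-\log D))=H^0(X,\Omega^{n-p}_X(\log D)\otimes(K_X+D)^{-1})$. Applying Theorem~\ref{Main Theorem 1} with $L=K_X+D$, which is pseudo-effective, shows this vanishes whenever $n-p<\nd(K_X+D)$.

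The special cases are $p=1$ with $\nd=n$, giving no logarithmic vector fields and hence no infinitesimal automorphisms of $(X,D)$, and $p=2$ with $\nd\geq n-1$, giving no logarithmic bivectors and hence no logarithmic Poisson structures.
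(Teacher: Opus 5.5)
Your proof is correct and follows the paper's argument. The one variation is in the $r=1$ case: you identify $A_{\mathcal{F},D}\simeq N_{\mathcal{F}}^*\otimes\mathcal{O}_X(D)$ by a local computation at general points of $D_j$ and then extend by reflexivity, whereas the paper reads it off from $\det T_X(-\log D)\simeq\det T_X\otimes\mathcal{O}_X(-D)$ and multiplicativity of determinants; both work.

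There is a small slip in Part (2). The perfect pairing is the wedge product $\Omega^p_X(\log D)\otimes\Omega^{n-p}_X(\log D)\to\Omega^n_X(\log D)$, or equivalently contraction $\wedge^pT_X(-\log D)\otimes\Omega^n_X(\log D)\to\Omega^{n-p}_X(\log D)$. A perfect pairing of $\wedge^pT_X(-\log D)$ with $\Omega^{n-p}_X(\log D)$ would instead force $\wedge^pT_X(-\log D)\simeq\Omega^p_X(\log D)$. The isomorphism you then deduce is nonetheless the correct one.
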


\begin{proof}
    (1) The natural inclusion
    $$\mathcal{Q}^*\hookrightarrow \Omega_X^1(\log D)$$
    induces, outside an analytic subset of codimension at least two, a nonzero morphism
    $$A_{\mathcal{F},D}\longrightarrow \wedge^r\Omega_X^1(\log D)=\Omega_X^r(\log D).$$
    Since $A_{\mathcal{F},D}$ and $\Omega_X^r(\log D)$ are locally free, this morphism extends across the analytic subset of codimension at least two.
    Proposition~\ref{Proposition:Numerical line subbundle principle} applied with $p=r$ gives the desired conclusion.

    In the codimension-one case, the invariance of every
    irreducible component of $D$ gives
    $\mathcal{F}\subset T_X(-\log D)$. Using
    \[
        \det T_X(-\log D)
       \simeq
       \det T_X\otimes\mathcal{O}_X(-D),
    \]
    we obtain
    \[
        \det\left(T_X(-\log D)/\mathcal{F}\right)
        \simeq
        N_{\mathcal{F}}\otimes\mathcal{O}_X(-D),
    \]
    and hence
    \[
        A_{\mathcal{F},D}
        \simeq
        N_{\mathcal{F}}^*\otimes\mathcal{O}_X(D).
    \]
    
    (2) Logarithmic duality gives
    $$\wedge^pT_X(-\log D)\simeq\Omega_X^{n-p}(\log D)\otimes\mathcal{O}_X(-(K_X+D)).$$
    Thus a nonzero logarithmic $p$-vector field determines a nonzero section of
    $$\Omega_X^{n-p}(\log D)\otimes\mathcal{O}_X(-(K_X+D)).$$
    By applying Theorem~\ref{Main Theorem 1} to $K_X+D$, we obtain the desired conclusion.
\end{proof}

    No Frobenius integrability assumption is used in the first
    assertion. For projective $X$, its codimension-one consequence
    already appears in \cite[Proposition~9.3]{Tou16}; the formulation
    above records the compact K\"ahler numerical bound uniformly in
    arbitrary corank.

    Here a logarithmic Poisson structure means a section
    \[
        \sigma\in
        H^0\left(X,\wedge^2T_X(-\log D)\right)
    \]
    satisfying $[\sigma,\sigma]=0$. The corollary does not exclude
    logarithmic symplectic structures. Indeed, if $n=2m$ and $D$ is
    the reduced degeneracy divisor of a logarithmic symplectic
    structure $\sigma$, then
    \[
        \operatorname{div}(\sigma^m)=D,
        \qquad
        \mathcal{O}_X(D)\simeq K_X^{-1},
    \]
    so that $K_X+D\sim 0$ and the numerical threshold above gives no
    vanishing; see \cite{Pym18}.

Combining Proposition~\ref{Proposition:Numerical line subbundle principle} and the results in \cite{CP16, IM22}, we have the following result.

\begin{theorem}\label{Theorem:Logarithmic c2 rigidity}
    Let $X$ be a compact K\"ahler manifold and $D$ a reduced simple normal crossing divisor on $X$.
    Assume that $K_X+D$ is nef.
    Then the following are equivalent:
    \begin{enumerate}
        \item $c_2(\Omega^1_X(\log D))=0 \text{ in } H^{2,2}(X,\mathbb{R})$.
        \item $\Omega^1_X(\log D)$ is nef and $\nd(K_X+D)\leq1$.
    \end{enumerate}

\end{theorem}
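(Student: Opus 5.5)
The plan is to prove both implications using two inputs: the structure theorem of Iwai--Matsumura for nef logarithmic cotangent bundles (and, in the absolute case, Cao--Păun), and the numerical line subbundle principle (Proposition~\ref{Proposition:Numerical line subbundle principle}). Throughout, write $E:=\Omega^1_X(\log D)$ and $n=\dim X$, so that $\det E=K_X+D$.

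\textbf{(2)$\Rightarrow$(1).} Assume $E$ is nef and $\nd(K_X+D)\leq 1$.
\begin{enumerate}
  \item Since $E$ is nef, its Chern classes satisfy the nef-bundle Schur inequalities (Demailly--Peternell--Schneider): $0\leq c_2(E)\leq c_1(E)^2$ when paired with any product of nef classes.
  \item Because $c_1(E)=c_1(K_X+D)$ is nef with $\nd\leq 1$, we have $c_1(E)^2=0$ in cohomology.
  \item Pairing with $\omega^{n-2}$ then gives $0\leq \int c_2(E)\wedge\omega^{n-2}\leq \int c_1(E)^2\wedge\omega^{n-2}=0$.
  \item To get $c_2(E)=0$ as a class, not just one zero pairing, invoke the Iwai--Matsumura structure theorem. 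It says that a nef $E$ with $c_1(E)^2\cdot\omega^{n-2}=0$ is numerically flat up to the relevant extension structure, or at least that its $c_2$ is a limit of classes $c_2$ of a hermitian flat filtration. Hence $c_2(E)=0$ in $H^{2,2}(X,\mathbb{R})$.
\end{enumerate}
This citation is the delicate part of this direction: I must check exactly which hypothesis \cite{IM22} requires.

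\textbf{(1)$\Rightarrow$(2).} Assume $K_X+D$ is nef and $c_2(E)=0$.
\begin{enumerate}
  \item From \cite{IM22}, nefness of $K_X+D$ together with the orbifold/log Miyaoka--Yau-type inequality $(c_1^2-\tfrac{2n}{n+1}c_2)\cdot(K_X+D)^{n-2}\geq 0$ and its equality analysis gives that $E$ is nef. More precisely, $c_2=0$ forces the semistability/Bogomolov equality case, which by their structure theorem yields nefness of $E$.
  \item Given nefness of $E$, apply the structure theorem: after passing to a suitable finite étale cover $X'\to X$ with induced snc divisor $D'$, there is an exact sequence
  \[
  0\to F\to E'\to Q\to 0,
  \]
  where $E'$ is the pullback of $E$, $Q$ is numerically flat, and $F$ is a line bundle whose $c_1$ carries $c_1(E)$ numerically. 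Moreover $\nd$ is preserved under étale covers.
  \item The main point is to bound $\nd(K_X+D)$. Since $c_1(E)$ differs from $c_1(F)$ by a numerically trivial class, $F$ is pseudo-effective (indeed nef) with $\nd(F)=\nd(K_{X'}+D')$.
  \item The inclusion $F\hookrightarrow\Omega^1_{X'}(\log D')$ is a nonzero morphism from a pseudo-effective line bundle. Proposition~\ref{Proposition:Numerical line subbundle principle} with $p=1$ therefore gives $\nd(F)\leq 1$.
  \item Consequently $\nd(K_X+D)\leq 1$.
\end{enumerate}

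I expect the main obstacle to be extracting from \cite{IM22} precisely a rank-one subsheaf $F\subset E'$ carrying the positivity of $\det E$ with $\nd(F)=\nd(K_X+D)$. If the structure theorem instead gives a quotient, I would take $\det$ of the positive piece of a flat filtration, land in $\Omega^r_{X'}(\log D')$, and argue from $c_2=0$ that the positive part has rank one. The alternative is to use directly that $c_1^2\cdot\omega^{n-2}=c_2\cdot\omega^{n-2}+(\text{nonnegative terms})$ for nef $E$. That gives $(K_X+D)^2\cdot\omega^{n-2}=0$, hence $\nd\leq1$ by nefness, and this would bypass the proposition entirely; I would try that route first and fall back on the subbundle argument.
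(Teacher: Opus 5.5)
Your overall architecture matches the paper's. For (2)$\Rightarrow$(1) you use the Demailly--Peternell--Schneider inequalities $0\leq c_2(E)\cdot\eta^{n-2}\leq c_1(E)^2\cdot\eta^{n-2}=0$, then the Iwai--Matsumura result to upgrade to $c_2(E)=0$. For (1)$\Rightarrow$(2) you extract a line subbundle $L\subset\Omega^1_X(\log D)$ with $c_1(L)=c_1(K_X+D)$ and apply Proposition~\ref{Proposition:Numerical line subbundle principle}; the paper does this on $X$ itself, so your \'etale cover is unnecessary but harmless.

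\textbf{The gap is in your first step of (1)$\Rightarrow$(2), the nefness of $E=\Omega^1_X(\log D)$.} The Iwai--Matsumura nefness criterion, as the paper uses it, takes as a \emph{hypothesis} that $E$ is $\{\eta\}^{n-1}$-generically nef. That is, $c_1(Q)\cdot\{\eta\}^{n-1}\geq 0$ for every K\"ahler form $\eta$ and every torsion-free quotient $E\twoheadrightarrow Q$. Only together with this does $c_2(E)=0$ give nefness, and you never establish it. The missing ingredient is the logarithmic generic semipositivity theorem of Campana--P\u{a}un \cite[Theorem~1.1 and (5.3)]{CP16}, which the paper uses to obtain exactly this from the nefness of $K_X+D$. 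Your substitute does not work, for two reasons:
\begin{itemize}
\item The inequality $(c_1^2-\tfrac{2n}{n+1}c_2)\cdot(K_X+D)^{n-2}\geq 0$ is not Miyaoka--Yau, which bounds $c_1^2$ from \emph{above} by a multiple of $c_2$. With $c_2=0$ it collapses to $(K_X+D)^n\geq 0$, which is automatic for a nef class and says nothing about $E$.
\item A Bogomolov-equality argument needs $E$ to be semistable. This fails in exactly the situations the theorem describes: in Example~\ref{Example:Sharpness of logarithmic c2 rigidity} the quotient $\mathcal{O}_X^{\oplus(n-1)}$ has slope $0<\mu(E)$.
\end{itemize}

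\textbf{Your proposed shortcut, which you say you would try first, is false.} For nef $E$ one has $c_1^2=c_2+(c_1^2-c_2)$, with both classes pairing nonnegatively with $\omega^{n-2}$. So $c_2=0$ yields only $(K_X+D)^2\cdot\omega^{n-2}=(c_1^2-c_2)\cdot\omega^{n-2}\geq 0$, not $=0$. For example, $A\oplus\mathcal{O}_X^{\oplus(n-1)}$ with $A$ ample is nef and has $c_2=0$, yet its determinant $A$ has $\nd(A)=n$. Thus nefness plus $c_2=0$ puts no bound on $\nd(K_X+D)$. What excludes $\nd(K_X+D)\geq 2$ is that the rank-one piece carrying $c_1(E)$ lies inside $\Omega^1_X(\log D)$. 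That is precisely the logarithmic Bogomolov--Sommese vanishing, via Proposition~\ref{Proposition:Numerical line subbundle principle}. The subbundle argument is therefore not a fallback but the core of the proof, and it is where the paper's main theorem enters.
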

\begin{proof}
    If $K_X+D$ is nef and $c_2(\Omega^1_X(\log D))=0$, then by \cite[Theorem~1.1 and formula (5.3)]{CP16}, for any K\"ahler form $\eta$ and torsion-free quotient $\Omega^1_X(\log D)\twoheadrightarrow Q$, we have 
    $$c_1(Q)\cdot\{\eta\}^{n-1}\geq 0.$$
    This means that $\Omega^1_X(\log D)$ is $\{\eta\}^{n-1}$-generically nef (see \cite{IM22}). 
    Then by \cite[Theorem~1.6 and Theorem~5.7]{IM22}, $\Omega^1_X(\log D)$ is nef.
    We prove $\nd(K_X+D)\leq1$ by contradiction.
    Assume that
    $$\nd(K_X+D)\geq2.$$
    By \cite[Theorem~5.7]{IM22}, there exists a line bundle $L\subset \Omega^1_X(\log D)$ such that $\Omega^1_X(\log D)/L$ is torsion-free and
    $$c_1(L)=c_1(\Omega^1_X(\log D))=c_1(K_X+D).$$
    In particular, $L$ is nef and
    $$\nd(L)=\nd(K_X+D)\geq2.$$
    On the other hand, the inclusion
    $$L\hookrightarrow \Omega_X^1(\log D)$$
    and Proposition~\ref{Proposition:Numerical line subbundle principle} imply that $\nd(L)\leq1$, which is a contradiction.

    If $\Omega^1_X(\log D)$ is nef and $\nd(K_X+D)\leq1$, then from \cite[Corollary~2.6]{DPS94}, we obtain 
    $$0\leq c_2(\Omega^1_X(\log D))\cdot\eta^{n-2}\leq c_1(\Omega^1_X(\log D))^2\cdot\eta^{n-2}=0$$
    for any K\"ahler form $\eta$. 
    Again by the first part of the proof, $c_2(\Omega^1_X(\log D))=0$.

\end{proof}

\begin{example}\label{Example:Sharpness of logarithmic c2 rigidity}
    Let $C$ be a compact Riemann surface of genus $\geqslant 1$ and let $D_C$ be a reduced effective divisor such that
    $$\deg(K_C+D_C)>0.$$
    Let $T$ be a complex torus of dimension $n-1$ and set
    $$X=C\times T, \qquad D=D_C\times T.$$
    Then
    $$\Omega_X^1(\log D)\simeq \operatorname{pr}_C^*\Omega_C^1(\log D_C)\oplus\mathcal{O}_X^{\oplus(n-1)}$$
    is nef and
    $$c_2(\Omega_X^1(\log D))=0.$$
    Moreover,
    $$K_X+D\simeq \operatorname{pr}_C^*(K_C+D_C)$$
    and
    $$\nd(K_X+D)=1.$$
    Hence the upper bound in Theorem~\ref{Theorem:Logarithmic c2 rigidity} is sharp.
\end{example}


\begin{thebibliography}{99}

    \bibitem[Bog78]{Bog78} Fedor Alekseevich Bogomolov, \textit{Holomorphic tensors and vector bundles on projective varieties}, Izv. Akad. Nauk SSSR Ser. Mat. {\bf 42} (1978), no.~6, 1227--1287, 1439.
    
    \bibitem[Bog80]{Bog80} Fedor Alekseevich Bogomolov, \textit{Unstable vector bundles and curves on surfaces}, in Proceedings of the International Congress of Mathematicians (Helsinki, 1978), pp. 517--524, Acad. Sci. Fennica, Helsinki, 1980.
    
    \bibitem[Bou02]{Bou02} S\'ebastien Boucksom, \textit{C\^{o}nes positifs des vari\'et\'es complexes compactes}, PhD thesis, Universit\'e Joseph Fourier, Grenoble, 2002.
    
    \bibitem[BEGZ10]{BEGZ10} S\'ebastien Boucksom, Philippe Eyssidieux, Vincent Guedj, and Ahmed Zeriahi, \textit{Monge-Amp\`ere equations in big cohomology classes}, Acta Math. {\bf 205} (2010), no.~2, 199--262.

    \bibitem[CP16]{CP16} Fr\'ed\'eric Campana and Mihai P\u{a}un, \textit{Positivity properties of the bundle of logarithmic tensors on compact K\"ahler manifolds}, Compos. Math. {\bf 152} (2016), no.~11, 2350--2370.

    \bibitem[Cao14]{Cao14} Junyan Cao, \textit{Numerical dimension and a Kawamata-Viehweg-Nadel-type vanishing theorem on compact K\"ahler manifolds}, Compos. Math. {\bf 150} (2014), no.~11, 1869--1902.

    \bibitem[Dem]{DemBig} Jean-Pierre Demailly, \textit{Complex analytic and differential geometry}, 2012, available at \url{http://www-fourier.ujf-grenoble.fr/~demailly/books.html}

    \bibitem[Dem02]{Dem02} Jean-Pierre Demailly, \textit{On the Frobenius integrability of certain holomorphic $p$-forms}, in \textit{Complex geometry (G\"ottingen, 2000)}, Springer, Berlin, 2002, 93--98.

    \bibitem[Dem12]{DemSmall} Jean-Pierre Demailly, \textit{Analytic methods in algebraic geometry}, Surveys of Modern Mathematics {\bf 1}, International Press, Somerville, MA; Higher Education Press, Beijing, 2012.

    \bibitem[DP10]{DP10} Jean-Pierre Demailly and Nefton Pali, \textit{Degenerate complex Monge-Amp\`ere equations over compact K\"ahler manifolds}, Internat. J. Math. {\bf 21} (2010), no.~3, 357--405.
    
    \bibitem[DP03]{DP03} Jean-Pierre Demailly and Thomas Peternell, \textit{A Kawamata-Viehweg vanishing theorem on compact K\"ahler manifolds}, J. Differential Geom. {\bf 63} (2003), no. 2, 231--277.

    \bibitem[DPS94]{DPS94} Jean-Pierre Demailly, Thomas Peternell, and Michael Schneider, \textit{Compact complex manifolds with numerically effective tangent bundles}, J. Algebraic Geom. {\bf 3} (1994), no.~2, 295--345.

    \bibitem[EV92]{EV92} H\'el\`ene Esnault and Eckart Viehweg, \textit{Lectures on vanishing theorems}, DMV Seminar, 20, Birkh\"auser, Basel, 1992. 
    
    \bibitem[Gra15]{Gra15} Patrick Graf, \textit{Bogomolov-Sommese vanishing on log canonical pairs}, J. Reine Angew. Math. {\bf 702} (2015), 109--142.
    
    \bibitem[GZ15]{GZ15} Qi'an Guan and Xiangyu Zhou, \textit{A proof of Demailly's strong openness conjecture}, Ann. of Math. (2) {\bf 182} (2015), no. 2, 605--616.

    \bibitem[IM22]{IM22} Masataka Iwai and Shin-ichi Matsumura, \textit{Abundance theorem for minimal compact K\"ahler manifolds with vanishing second Chern class}, Preprint, arXiv:2205.10613 (2022).

    \bibitem[LMNWZ25]{LMNWZ25} Zhi Li, Xiankui Meng, Jiafu Ning, Zhiwei Wang, and Xiangyu Zhou, \textit{On a Bogomolov type vanishing theorem}, Nagoya Math. J. {\bf 257} (2025), 170--182.

    \bibitem[MQZ26]{MQZ26} Xiankui Meng, Chenghao Qing, and Xiangyu Zhou, \textit{A Bogomolov type vanishing theorem}, Preprint, arXiv:2607.14858 (2026).
    
    \bibitem[Mou98]{Mou98} Christophe Mourougane, \textit{Versions k\"ahl\'eriennes du th\'eor\`eme d'annulation de Bogomolov}, Collect. Math. {\bf 49} (1998), no.~2-3, 433--445.
    
    \bibitem[Nad90]{Nad90} Alan Michael Nadel, \textit{Multiplier ideal sheaves and K\"ahler-Einstein metrics of positive scalar curvature}, Ann. of Math. (2) {\bf 132} (1990), no. 3, 549--596.

    \bibitem[PRT22]{PRT22} Jorge Vit\'orio Pereira, Erwan Rousseau, and Fr\'ed\'eric Touzet, \textit{Numerically non-special varieties}, Compos. Math. {\bf 158} (2022), no.~6, 1428--1447.

    \bibitem[Pym18]{Pym18} Brent Pym, \textit{Constructions and classifications of projective Poisson varieties}, Lett. Math. Phys. {\bf 108} (2018), no.~3, 573--632.
    
    \bibitem[SS85]{SS85} Bernard Shiffman and Andrew John Sommese, \textit{Vanishing theorems on complex manifolds}, Progress in Mathematics, 56, Birkh\"auser Boston, Boston, MA, 1985.

    \bibitem[Tou16]{Tou16} Fr\'ed\'eric Touzet, \textit{On the structure of codimension one foliations with pseudoeffective conormal bundle}, in \textit{Foliation theory in algebraic geometry}, Simons Symposia, Springer, Cham, 2016, 157--216.
    
    \bibitem[Wat23]{Wat23} Yuta Watanabe, \textit{Bogomolov-Sommese type vanishing theorem for holomorphic vector bundles equipped with positive singular Hermitian metrics}, Math. Z. {\bf 303} (2023), no. 4, Paper No. 92, 23 pp.
    
    \bibitem[Wat26]{Wat26} Yuta Watanabe, \textit{$L^2$-Dolbeault isomorphisms and vanishing theorems for logarithmic sheaves twisted by multiplier ideal sheaves}, Math. Z. {\bf 312} (2026), no.~2, Paper No. 40, 28 pp.
    
    \bibitem[Wu20]{Wu20} Xiaojun Wu, \textit{On a vanishing theorem due to Bogomolov}, Preprint, arXiv:2011.13751 (2020).

    \bibitem[Yau78]{Yau78} Shing-Tung Yau, \textit{On the Ricci curvature of a compact K\"ahler manifold and the complex Monge-Amp\`ere equation. I}, Comm. Pure Appl. Math. {\bf 31} (1978), no.~3, 339--411.
    
    \bibitem[Zuc79]{Zuc79} Steven Zucker, \textit{Hodge theory with degenerating coefficients. $L^2$ cohomology in the Poincar\'{e} metric}, Ann. of Math. (2) {\bf 109} (1979), no. 3, 415--476.

\end{thebibliography}
\end{document}